\newcommand\nc{\newcommand}
\nc{\od}[2]{\frac{d#1}{d#2}}
\nc{\be}{\begin{equation}}
\nc{\ee}{\end{equation}}
\nc{\bd}{\begin{displaymath}}
\nc{\ed}{\end{displaymath}}
\nc{\bq}{\begin{eqnarray}}
\nc{\eq}{\end{eqnarray}}
\nc{\p}{\partial}
\nc{\ra}{\rightarrow}
\nc{\R}{\mathbb{R}}
\nc{\dx}{\mathrm{d}x}
\nc{\curl}{\operatorname{curl}}
\nc{\medotimes}{\resizebox{11pt}{!}{$\otimes$}}
\nc{\T}{\mathrm{T}}
\nc{\comment}[1]{ \texttt{\color{red} #1 } }
\newtheorem{proposition}{Proposition}[section]
\newtheorem{lemma}[proposition]{Lemma}
\newtheorem{theorem}[proposition]{Theorem}
\theoremstyle{remark}
\newtheorem{remark}[proposition]{Remark}
\theoremstyle{definition}
\begin{document}


\title{Optimal energy scaling for a shear experiment in single-crystal plasticity with cross-hardening\\(Running head: Optimal energy scaling)}
\author{Keith Anguige\thanks{e-mail: keith.anguige@durham.ac.uk}\\~\\ and \\~\\
Patrick Dondl\thanks{e-mail: patrick.dondl@durham.ac.uk},\\~\\ Department of Mathematical Sciences,\\ Durham University,\\ Science Laboratories,\\ South Rd.,\\ DURHAM DH1 3LE,\\ UNITED KINGDOM} 

\date{October 14th, 2013}
\maketitle

\begin{abstract}
Consideration is given to a non-convex variational model for a shear experiment in the framework of single-crystal linearised plasticity with infinite cross-hardening. The rectangular shear sample is clamped at each end, and is subjected to a prescribed horizontal or diagonal shear, modelled by an appropriate hard Dirichlet condition. We ask: how much energy is required to impose such a shear, and how does it depend on the aspect ratio? Assuming that just two slip systems are active, we show that there is a critical aspect ratio, above which the energy is strictly positive, and below which it is zero. Furthermore, in the respective regimes determined by the aspect ratio, we prove energy scaling bounds, expressed in terms of the amount of prescribed shear.
\end{abstract}

\textbf{Keywords:} Single-crystal plasticity, cross-hardening, energy scaling, aspect ratio.

\section{Introduction and the main model}
In this article, we study optimal energy scaling and microstructure formation in the elasto-plastic deformation of single crystals, whereby, as in \cite{conti_dolz, conti_dolz2}, we adopt the framework proposed by Ortiz and Repetto \cite{Ortiz_99b}, such that the incremental displacements of inelastic solids are determined by minimising an appropriate energy functional. 

The main model considered here is a geometrically linear, continuum-mechanical elasto-plastic energy based on the assumption of strong cross-hardening, i.e. that only one slip system can be active at any given point of the material. The basic motivation for our investigation is the following question: can one devise an experiment to determine whether this single-slip condition, taken together with a surface energy which penalises geometrically necessary dislocations (GNDs), is a relevant constraint which needs to be factored into macroscopic models?

The phenomenon of cross- (or latent-) hardening ~\cite{Wu_91a, Kocks_64a, Franciosi_82a} has the effect that activity in one slip system suppresses activity in all other slip systems, which leads to a loss of convexity in the minimization problem associated with the plastic deformation of single crystals~\cite{Ortiz_99b}. In fatigue experiments, as well as experiments involving only a single-pass deformation, a lamination behaviour is frequently observed~\cite{Rasmussen_80a, Jin_84a, Bay_92a, Dondl_09a}, and this is believed to stem from cross-hardening~\cite{Ortiz_99b, Dondl_09a}. Hence, similarly as in~\cite{Dondl_09a}, the interest in including cross-hardening in our model, as a side condition on the strain effected by plastic deformation.

The surface energy we include corresponds to the core energy of GNDs. In the geometrically nonlinear setting, the density of the core energy would be of the form $|(\det F^p)^{-1} (\curl F^p) (F^p)^\T|$ (see \cite{MueMie, Sve02, Cermelli} for a discussion), where $F^p$ is the plastic deformation strain, but since we are working in the simpler geometrically linear setting, this leaves us with just the expression $|\curl F^p|$ instead.

Our main results can be described heuristically as follows. We show that the inclusion of both cross-hardening and surface energy significantly affects the energetic scaling in a particular simple shear experiment for single crystals. The experiment modelled is that of a single crystal with a defined orientation of slip systems---realisable, for example, in a B2 crystal structure---and boundary conditions such that the cuboid crystal is rigidly fixed on its (square) top and bottom faces, according to a prescribed horizontal shear, while remaining free on its four vertical sides. Without either cross hardening or surface energy, the energy infimum in our system is zero (or equal to a certain amount of plastic work if a strain-hardening energy is included) when the aspect ratio of the crystal (base-square side-length/height) is below one, while the energy increases quadratically with the imposed shear magnitude when the aspect ratio is above one. This behaviour changes dramatically when multiple slip is 
forbidden and GNDs penalised: now the energy only vanishes for aspect ratios smaller than one half, while, again, for aspect ratios above one, the energy grows quadratically with the strain imposed by the boundary conditions. For intermediate aspect ratios, a new regime of linearly growing energy arises. Thus, we conclude that the experiment proposed here can be used to discriminate between those models with surface energy and cross hardening, and those without, and hence to determine whether the inclusion of these effects in macroscopic models for single-pass plastic deformation is physically reasonable.

Kinematically, we restrict ourselves to the case of a cubic crystal structure with $\left<0 1 1\right>\{1 0 0\}$ slip systems. This means that plastic deformation occurs only on planes with normal parallel to one of the three cube axes, and in the direction of one of two Burgers vectors lying diagonally in these planes. Furthermore, we include a surface-energy term consisting of the distributional curl of the plastic deformation, as well as a constant hardening. The domain is a cuboid given by $\Omega_L = (0,1)\times (0,L) \times (0,1) \subset \R^3$, and we consider the problem of minimizing the energy
\be
\label{eq:main_energy}
E_L(u,\beta) = \int_{\Omega_L} | (\nabla u - \beta)_\mathrm{sym}|^2~dx + 
\sigma  \int_{\Omega_L} |\curl \beta| + \tau \int_{\Omega_L} |\beta|~dx
\ee
among vector-valued displacements $u \colon \Omega_L \to \R^3$ and matrix-valued plastic deformations $\beta \colon \Omega_L \to \R^{3\times 3}$, for non-negative coefficients $\sigma$ and $\tau$. Here, the subscript `$\mathrm{sym}$' denotes taking the symmetric part of the matrix in parenthesis. The first term in~\eqref{eq:main_energy} is the linearised elastic energy of the specimen, and the second term penalises GNDs, which are quantified by the row-wise curl of the Nye tensor. The third term represents a constant hardening, which for the most part will be neglected by taking $\tau = 0$ (see, however, remark after Theorem \ref{3d_thm}).

The admissible displacements for the minimisation problem are all functions $u$  
 such that $u(\cdot,0,\cdot)=(0,0,0)$, $u(\cdot,L,\cdot)= \gamma(1,0,0)$ for some parameter $\gamma\ge0$\footnote{For comparison, we also consider the diagonal-shear case, where the upper end of the specimen is fixed according to $u(\cdot,L,\cdot)= \gamma(1,1,0)$.}, and such that $(u,\beta)$ has finite energy for some $\beta$. This models an experiment in which a cuboid specimen with clamped boundary conditions on the top and bottom undergoes a simple shear deformation. Note that the free boundary conditions on the four sides of the specimen are essential in order to render this a physically realisable experiment, and that they also account for most of the technical difficulties in the analysis. While not subject to any boundary
conditions, the plastic deformation $\beta$ must be such that the absolute value of its distributional row-wise curl should be the density of a finite measure. No
further regularity condition on the test functions is required to obtain upper and lower bounds for the energy, which will be the focus of attention in the sequel.

The single-slip condition translates into a side condition on $\beta$, which, in our proposed experiment, reads as follows.   The single-crystal is oriented such that one of the cubic crystal axes points in the $x_3$ direction of the specimen, and the other two lie in the $x_1-x_2$ plane, rotated by $45^{\circ}$ with respect to the $x_1-x_2$ co-ordinate axes. Thus,
\begin{align}
\label{eq:side_cond_unrelaxed}
\beta(x) \in \Bigg\{ &s(x) \begin{pmatrix} 1/\sqrt{2} \\\ -1/\sqrt{2} \\ 1 \end{pmatrix}
\medotimes
\begin{pmatrix} 1 \\ 1\\ 0 \end{pmatrix}, 
s(x) \begin{pmatrix} 1/\sqrt{2} \\\ -1/\sqrt{2} \\ -1 \end{pmatrix}
\medotimes
\begin{pmatrix} 1 \\ 1\\ 0 \end{pmatrix}, \\
&s(x) \begin{pmatrix} 1/\sqrt{2} \\\ 1/\sqrt{2} \\ 1 \end{pmatrix}
\medotimes
\begin{pmatrix} 1 \\ -1\\ 0 \end{pmatrix}, 
s(x) \begin{pmatrix} 1/\sqrt{2} \\\ 1/\sqrt{2} \\ -1 \end{pmatrix}
\medotimes
\begin{pmatrix} 1 \\ -1\\ 0 \end{pmatrix},\nonumber \\
&s(x) \begin{pmatrix} 1 \\ 1 \\ 0 \end{pmatrix}
\medotimes
\begin{pmatrix} 0 \\ 0 \\ 1 \end{pmatrix}, 
s(x) \begin{pmatrix} 1 \\ -1 \\ 0 \end{pmatrix}
\medotimes
\begin{pmatrix} 0\\ 0\\ 1 \end{pmatrix}  \Bigg\}, \nonumber
\end{align}
for almost every $x=(x_1,x_2,x_3)\in \Omega_L$ and some coefficient $s \colon \Omega_L \to \R$. Note that there are indeed materials which exhibit such (primary) slip systems (or rotated equivalents\footnote{There may also exist further Burgers vectors lying within the respective slip planes---we show in section~\ref{sec:3d} that such considerations are irrelevant to our analysis.}), in particular materials with B2 (caesium-chloride) structure, such as the intermetallic compounds Yttrium-Zinc~\cite{Cao} or Nickel-Aluminium.

The article is organised as follows. In the next section we introduce a two-dimensional (sliced) version of our problem, and prove the key energy inequalities. Then, in section~\ref{sec:3d}, we show that the relaxed three-dimensional problem can be reduced to the two-dimensional one, simply by adding up slices. In section~\ref{sec:scalar}, we show that a further reduction to a two-dimensional scalar model of plasticity does not, in our set-up with free lateral boundary conditions, yield reasonable energy bounds. Finally, some conclusions and open problems are discussed in section~\ref{sec:conclusions}.

\section{The two-dimensional model} \label{sec:2d}

 We now introduce a 2-d model which basically corresponds to a vertical slice of the full 3-d model. Once the dimension reduction has been made, we proceed to obtain the desired upper and lower energy bounds.

\subsection{Description of the 2-d model}

Consider a rectangular specimen which occupies the domain $\overline{\Omega}_L=(0,1)\times(0,L)\subset\mathbb{R}^2$, and look for a vector displacement $u\colon \overline{\Omega}_L\rightarrow\mathbb{R}^2$ and a plastic-distortion tensor $\beta\colon \overline{\Omega}_L\rightarrow\mathbb{R}^{2\times 2}$ which minimise the linearised-plasticity functional
\be
\overline{E}_L(u,\beta) = \int_{\overline{\Omega}_L}|(\nabla u - \beta)_{\mathrm{sym}}|^2 dx + \sigma\int_{\overline{\Omega}_L}|\nabla\times\beta|,\label{E_L}
\ee
subject to the single-slip side condition
\be
\beta(x)\propto\left(
\begin{array}{rr}
1 & -1\\
1 & -1
\end{array}
\right)
\quad\textrm{or}\quad
\left(
\begin{array}{rr}
1 & 1\\
-1 & -1
\end{array}
\right),\label{side}
\ee
for a.e. $x=(x_1,x_2)\in\overline{\Omega}_L$.

The {\em curl} term appearing in (\ref{E_L}) models the energy of GNDs (with $\sigma$ interpreted as the line energy), and is defined by analogy with the measure-theoretic quantity introduced in \cite{conti_05}. This ensures that our {\em curl} term is continuous with respect to a mollification of $\beta$ which respects the side condition (\ref{side}), at least for the kind of piecewise-constant example $\beta$ given by Conti and Ortiz \cite{conti_05}. The most convenient way of explicitly making the definition is in terms of co-ordinates $(\xi,\eta)$ which are rotated at $45^{\circ}$ with respect to the $x_i$, and hence aligned with the slip directions. Thus, for $\beta\in\textrm{BV}$, say,
\be
\int_{\overline{\Omega}_L}|\nabla\times\beta| = \textrm{sup}\int_{\overline{\Omega}_L}\phi_{,\xi}\beta_{\xi\eta}+\psi_{,\eta}\beta_{\eta\xi}~d\xi d\eta,\label{curl_def}
\ee
where the {\em supremum} is taken over all $\phi, \psi\in C_0^1(\overline{\Omega}_L,[-1,1])$, and we have used the comma notation for partial derivatives.

Note that, in such rotated co-ordinates, the side condition (\ref{side}) takes the simpler form
\be
\beta(\xi,\eta)\propto\left(
\begin{array}{rr}
0 & 0\\
1 & 0
\end{array}
\right)
\quad\textrm{or}\quad
\left(
\begin{array}{rr}
0 & 1\\
0 & 0
\end{array}
\right),\label{varside}
\ee
for a.e. $(\xi,\eta)\in\overline{\Omega}_L$ , which will be extremely useful in the sequel. Also note that, for smooth $\beta$, (\ref{curl_def}) is just the $L^1$-norm of the ordinary row-wise curl of $\beta$.

Our main goal in what follows will be to obtain lower bounds on $\overline{E}_L$ as the reciprocal aspect ratio, $L$, is varied, subject to two sets of boundary conditions, corresponding to diagonal and horizontal shear, respectively. The {\em infimum} of $\overline{E}_L$ will be taken over $(u,\beta)$ which satisfy the boundary conditions under consideration, and we will write $J_L= \inf \overline{E}_L(\cdot,\cdot)$.

The boundary conditions to be treated are, when expressed in $x_i$-co-ordinates,
\begin{description}
\item[(BC1)] $u=\gamma(1,1)$ at $x_2=L$ and $u=(0,0)$ at $x_2=0$, 
\item[(BC2)] $u=\gamma(1,0)$ at $x_2=L$ and $u=(0,0)$ at $x_2=0$,
\end{description}
where $\gamma>0$ is a measure of the fixed average (diagonal or horizontal) shear.

We begin with the following pair of elementary results:

\begin{proposition}
Under \textbf{BC1} or \textbf{BC2 }, $J_L$ is a monotonically decreasing function of $L$. \label{monot}
\end{proposition}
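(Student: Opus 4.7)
The plan is to show $J_{L_2} \le J_{L_1}$ for $L_1 < L_2$ by a direct extension argument. Given $\epsilon > 0$ and an admissible $(u, \beta)$ on $\overline{\Omega}_{L_1}$ with $\overline{E}_{L_1}(u, \beta) < J_{L_1} + \epsilon$, I build a competitor $(\tilde u, \tilde \beta)$ on $\overline{\Omega}_{L_2}$ of essentially the same energy. The natural construction is to attach a rigid slab on top: set $(\tilde u, \tilde \beta) := (u, \beta)$ on $(0, 1) \times (0, L_1)$, and $\tilde u \equiv \gamma(1, 0)$ (respectively $\gamma(1, 1)$ under \textbf{BC1}), $\tilde \beta \equiv 0$ on the added strip $(0, 1) \times (L_1, L_2)$.

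A quick check confirms admissibility: the boundary conditions at $x_2 = 0$ and $x_2 = L_2$ hold by construction; $\tilde u$ is continuous across $x_2 = L_1$ by the top boundary condition on $u$; the side condition~(\ref{side}) on $\tilde \beta$ is trivial on the slab (since $0$ is admissible) and inherited from $\beta$ below; and the slab makes no elastic contribution since $\nabla \tilde u = 0 = \tilde \beta$ there. Hence the elastic term is preserved exactly.

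The delicate point is the curl: $\tilde \beta$ a priori has a jump across the interior interface $\{x_2 = L_1\} \subset \overline{\Omega}_{L_2}$, contributing a Dirac mass to $\nabla \times \tilde \beta$ equal to the trace of $\beta$ at the top of $\Omega_{L_1}$. To suppress this, I first pre-regularize by tapering $\beta$ smoothly to zero in a thin collar just below $x_2 = L_1$: multiply $\beta$ by a smooth cutoff $\chi_\delta(x_2)$ with $\chi_\delta = 1$ on $(0, L_1 - 2\delta)$ and $\chi_\delta = 0$ near $x_2 = L_1$. This preserves the side condition, since $\chi_\delta \beta$ remains a scalar multiple of the same admissible matrix pointwise, and the extra elastic cost is $o(1)$ as $\delta \to 0$ by dominated convergence (using that $\beta \in L^2$, which follows from the side condition together with the finite elastic energy).

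The main obstacle is controlling the extra curl cost from the cutoff, which is bounded by $(C/\delta) \int_{L_1 - 2\delta}^{L_1} \int_0^1 |\beta|\, dx$ and does not obviously tend to zero as $\delta \to 0$. I expect to handle this by selecting the cutoff level along a sequence of ``good'' slices at which the horizontal trace of $\beta$ is small---exploiting the BV-regularity of $\beta$ coming from the finiteness of $\int |\nabla \times \beta|$---or equivalently by a dyadic averaging argument over the choice of $\delta$. Once the regularization is in place, the extended pair carries no interior jump and satisfies $\overline{E}_{L_2}(\tilde u, \tilde \beta) \le \overline{E}_{L_1}(u, \beta) + o_\delta(1)$; letting $\delta \to 0$ and then $\epsilon \to 0$ yields the claim. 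The \textbf{BC1} case is identical with $\gamma(1, 1)$ replacing $\gamma(1, 0)$ throughout.
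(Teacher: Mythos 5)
Your strategy---extending a near-minimiser from $\overline{\Omega}_{L_1}$ to $\overline{\Omega}_{L_2}$ by gluing a rigid slab on top---is the reverse of what the paper does, and the step you yourself flag as delicate is a genuine gap that your proposed repairs do not close. The extra curl produced by the cutoff $\chi_\delta$ is of the order $\frac{1}{\delta}\int_{L_1-2\delta}^{L_1}\int_0^1|\beta|\,dx$, and as $\delta\to 0$ this quantity converges to the trace of $|\beta|$ on the slice $\{x_2=L_1\}$, not to zero. There is no reason for ``good slices'' with small trace to exist: for near-optimal competitors under \textbf{BC2} with $L_1<1$ (compare the construction of Figure \ref{fig15}), $\beta$ is of order $\gamma/L_1$ in a full neighbourhood of the clamped boundary, so every nearby horizontal slice carries a trace of order $\gamma$; dyadic averaging fails for the same reason, since the averages converge to that trace. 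The same obstruction appears if you keep the jump instead of cutting off: the interfacial Dirac mass in $\p_\xi\beta_{\xi\eta}$ and $\p_\eta\beta_{\eta\xi}$ has total mass of order $\int_0^1|\beta(x_1,L_1^-)|\,dx_1$. The best your argument can deliver is $J_{L_2}\le J_{L_1}+C\sigma\gamma$, which is not monotonicity (and is vacuous precisely in the regime $1<L<2$, where $J_L$ is itself of order $\sigma\gamma$). A secondary issue: $\beta\in L^2$ does not follow from the side condition and finite elastic energy alone---you would also need $(\nabla u)_{\mathrm{sym}}\in L^2$, which is not part of the admissibility class.

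The fix is to restrict rather than extend, which is the paper's route. Given any competitor on the \emph{shorter} domain $\overline{\Omega}_{L_1}$, restrict it to the sub-rectangle $(0,\lambda)\times(0,L_1)$ with $\lambda=L_1/L_2<1$; this spans the full height, so it inherits both Dirichlet conditions, and restriction can only decrease each of the terms in the energy (for the curl this is immediate from the supremum definition (\ref{curl_def}), since test functions compactly supported in the sub-rectangle are admissible for the whole domain). Then rescale via $\hat{u}(\lambda x)=u(x)$, $\hat{\beta}(\lambda x)=\lambda^{-1}\beta(x)$, under which $\overline{E}$ is invariant, to obtain an admissible pair on $\overline{\Omega}_{L_2}$. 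This gives $J_{L_2}\le\overline{E}_{L_1}(u,\beta)$ for every competitor, hence $J_{L_2}\le J_{L_1}$, with no interface and no curl to control.
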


\begin{proof}
Fix $L>0$, and choose $0<\delta<L$. Next note that we can write $\overline{\Omega}_{L-\delta}=\left(\frac{L-\delta}{L}\right)\overline{\Omega}_L\sqcup A$, where $\textrm{meas}(A)>0$. Also note that $\overline{E}_L$ is invariant under an overall scaling of $\overline{\Omega}_L$, via $\hat{u}(\lambda x)=u(x), \hat{\beta}(\lambda x)=\frac{1}{\lambda}\beta(x)$, for a scale-factor $\lambda$, say. The result follows by the super-additivity of the energy as a set function. 
\end{proof}

\begin{proposition}
Under \textbf{BC1} or \textbf{BC2 }, $J_L$ is a left-continuous function of $L$. \label{left-cont}
\end{proposition}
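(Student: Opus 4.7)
The plan is to reduce to showing $\limsup_n J_{L_n} \le J_L$ (the other direction being automatic from monotonicity), and to produce, for each near-minimiser on $\overline{\Omega}_L$, a sequence of competitors on $\overline{\Omega}_{L_n}$ of asymptotically the same energy. Specifically, by Proposition~\ref{monot}, when $L_n\nearrow L$ the values $J_{L_n}$ form a non-increasing sequence bounded below by $J_L$, so $\lim_n J_{L_n}\ge J_L$.

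Fix $\epsilon>0$ and choose $(u,\beta)$ admissible on $\overline{\Omega}_L$ with $\overline{E}_L(u,\beta)<J_L+\epsilon$. Setting $\mu_n=L_n/L<1$, I first apply the isotropic rescaling used in the proof of Proposition~\ref{monot}, defining $\hat u_n(y)=u(y/\mu_n)$ and $\hat\beta_n(y)=\mu_n^{-1}\beta(y/\mu_n)$ on the shrunken rectangle $\mu_n\overline{\Omega}_L=(0,\mu_n)\times(0,L_n)$. By the scale invariance of $\overline{E}_L$, this pair satisfies BC1/BC2 at $y_2=0,L_n$, obeys the side condition~(\ref{side}) pointwise, and has energy precisely $\overline{E}_L(u,\beta)$. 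To turn it into a competitor on the full rectangle $\overline{\Omega}_{L_n}$, I extend to the thin lateral strip $A_n=(\mu_n,1)\times(0,L_n)$ by freezing the inner-edge trace: $\tilde u_n(y_1,y_2)=\hat u_n(\mu_n,y_2)$ and $\tilde\beta_n(y_1,y_2)=\hat\beta_n(\mu_n,y_2)$ on $A_n$. This extension is continuous across $\{y_1=\mu_n\}$ (so $\nabla\times\tilde\beta_n$ acquires no singular jump at the interface), preserves the side condition pointwise, and inherits the correct boundary values from $u$ along $\{x_1=1\}$.

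A short change-of-variable computation then shows that the additional energy on $A_n$---both the elastic term and the curl term, each of which depends only on $y_2$---is at most a factor of $\mu_n^{-1}-1$ times certain trace integrals of $\partial_{x_2}u$, $\beta$, and $\partial_{x_2}\beta$ on the edge $\{x_1=1\}$. Since this prefactor tends to $0$ as $\mu_n\to 1$, the argument closes provided those trace integrals are finite. This trace regularity is the main technical hurdle: finite energy only supplies $(\nabla u-\beta)_{\mathrm{sym}}\in L^2$ and $\nabla\times\beta$ a finite measure, neither of which directly bounds $\partial_{x_2}u$ or $\beta$ on a vertical slice. I would overcome this via a preliminary approximation of $(u,\beta)$ at the cost of an arbitrarily small extra energy (using a tangential mollification compatible with the side condition, as hinted at after~(\ref{curl_def})), together with a Fubini-type selection of an interior slice $\{x_1=1-\eta\}$ on which all the relevant trace quantities have finite $L^2$ norm; the entire construction is then carried out with $1-\eta$ playing the role of $1$ as the effective right edge.
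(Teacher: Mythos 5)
Your construction is essentially identical to the paper's proof: an isotropic rescaling of a near-minimiser onto $(0,\mu_n)\times(0,L_n)$ followed by extension to the remaining lateral strip by constancy in the $x_1$-direction, with the strip's energy vanishing as $\mu_n\to 1$. The only difference is that the paper sidesteps your trace-regularity concern by simply taking the near-minimiser to be smooth (implicitly invoking the energy-density of smooth pairs), whereas you make the approximation and good-slice selection explicit.
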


\begin{proof}
Fix $\epsilon, L>0$, and take a smooth pair $(u,\beta)$ on $\overline{\Omega}_L$ such that $\overline{E}_L(u,\beta)\leq J_L+\epsilon$. Now scale $u$ and $\beta$ as in the above proposition to get $(\hat{u},\hat{\beta})$ defined on $S=\left(\frac{L-\delta}{L}\right)\overline{\Omega}_L\subset\overline{\Omega}_{L-\delta}$. Extend $(\hat{u},\hat{\beta})$ to the whole of $\overline{\Omega}_{L-\delta}$ by constancy in the $x_1$-direction, thus preserving the boundary conditions at $x_2=0$ and $x_2=L-\delta$. This construction induces no extra curl on $x_1=1-\frac{\delta}{L}$, and the energy in $\overline{\Omega}_{L-\delta}\setminus S$ tends to zero as $\delta\ra 0$. Hence, $\lim_{\delta\ra 0^+}J_{L-\delta}\leq J_L + \epsilon$. Since $\epsilon$ was arbitrary, we get the desired result.
\end{proof}

\subsection{Diagonal shear}

While a diagonal shear would be rather difficult to impose in a real experiment, due to slippage, it turns out that most of the test-function constructions required for horizontal shear have simpler analogues in the \textbf{BC1} case which are much easier to explain in the first instance. For this reason, we now present the \textbf{BC1} model by way of an introduction to the more complex \textbf{BC2} problem.

In terms of rotated co-ordinates, $(\xi,\eta)$, aligned as in Figure \ref{fig1}, the Dirichlet condition of \textbf{BC1} at the top boundary becomes $(u_{\xi},u_{\eta})=\gamma (0,\sqrt{2})$, while, of course, $(u_{\xi},u_{\eta})=(0,0)$ at the bottom. Using this, it is easy to construct zero-energy test functions whenever $L>1$.

\begin{proposition}
Subject to \textbf{BC1}, $L>1\Rightarrow J_L=0$.\label{thm1}
\end{proposition}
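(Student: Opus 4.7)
The plan is to exhibit, for every $L > 1$, an explicit test pair $(u,\beta)$ satisfying \textbf{BC1} with $\overline{E}_L(u,\beta) = 0$; since $\overline{E}_L \ge 0$ this immediately gives $J_L = 0$. I would work throughout in the rotated coordinates $(\xi,\eta)$ introduced after \eqref{curl_def}. In these coordinates $\overline{\Omega}_L$ is a tilted rectangle whose bottom edge lies on $\{\eta = \xi\}$ with $\xi \in [0, 1/\sqrt{2}]$ and whose top edge lies on $\{\eta = \xi + \sqrt{2}L\}$ with $\xi \in [-L/\sqrt{2}, (1-L)/\sqrt{2}]$, while the Dirichlet data become $(u_\xi, u_\eta) = (0,0)$ on the bottom and $(u_\xi, u_\eta) = (0, \gamma\sqrt{2})$ on the top.

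The key observation is that for $L > 1$ the $\xi$-projections of the top and bottom edges are disjoint, separated by a gap of length $(L-1)/\sqrt{2} > 0$. I would therefore pick a Lipschitz function $g \colon \mathbb{R} \to \mathbb{R}$ that equals $\gamma\sqrt{2}$ on $[-L/\sqrt{2}, (1-L)/\sqrt{2}]$, equals $0$ on $[0, 1/\sqrt{2}]$, and interpolates linearly between these values on the intermediate interval $((1-L)/\sqrt{2}, 0)$. Then I set $u(\xi,\eta) = (0, g(\xi))$ and $\beta(\xi,\eta) = g'(\xi)\bigl(\begin{smallmatrix} 0 & 0 \\ 1 & 0 \end{smallmatrix}\bigr)$ in the $(\xi,\eta)$-basis. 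By construction $u$ meets \textbf{BC1}, $\beta$ satisfies the single-slip side condition \eqref{varside} (with sign determined by $g'$), and $\nabla u - \beta \equiv 0$, so the elastic term in \eqref{E_L} vanishes pointwise.

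The only non-cosmetic verification is that the distributional curl \eqref{curl_def} is zero. Since $\beta_{\xi\eta} \equiv 0$ and $\beta_{\eta\xi}(\xi,\eta) = g'(\xi)$ is independent of $\eta$, the supremand reduces via Fubini to $\int g'(\xi)\bigl(\int \psi_{,\eta}(\xi,\eta)\,d\eta\bigr)\,d\xi$, and compact support of $\psi$ inside $\overline{\Omega}_L$ forces the inner $\eta$-integral to vanish on every vertical line. Geometrically, the construction realises the imposed diagonal shear through a single slip band at $45^\circ$ that threads from the free left wall to the free right wall across the middle of the specimen, and this is feasible precisely when the top and bottom $\xi$-projections separate, i.e.\ when $L > 1$. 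I do not foresee any real obstacle beyond this elementary geometric observation; the hardest cases are deferred to the horizontal-shear analysis in \textbf{BC2}, which is explicitly flagged as more delicate.
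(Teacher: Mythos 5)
Your construction is correct and is essentially the paper's own proof: a single slip band of constant $\xi$ threading between the free lateral sides, with $\beta=\nabla u$ there so that both the elastic term and the curl vanish, which is geometrically possible precisely because for $L>1$ the $\xi$-projections of the two Dirichlet edges are disjoint. The only (immaterial) difference is that you let the band fill the whole gap of width $(L-1)/\sqrt{2}$, whereas the paper takes a band of arbitrary small width $\epsilon$ inside it.
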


\begin{proof}
Take a simple-shear band $S$, of width $\epsilon$, which just misses the horizontal boundaries, as in Figure \ref{fig1}. Let $u_{\xi}=u_{\eta}=\beta=0$ below $S$, $u_{\xi}=0, u_{\eta}=\sqrt{2}\gamma, \beta=0$ above $S$, and interpolate linearly between the two to ensure that
\be
\nabla u=\beta=\left(
\begin{array}{cc}
0 & 0\\
\frac{\sqrt{2}\gamma}{\epsilon} & 0
\end{array}
\right),
\ee
on $S$, in $(\xi,\eta)$ co-ordinates. Clearly the boundary conditions are satisfied by the resulting test functions, and it is trivial to check that $\overline{E}_{L}(u,\beta)=0$.
\end{proof}

\begin{figure}
\centering
\resizebox{4in}{3.7in}{\includegraphics{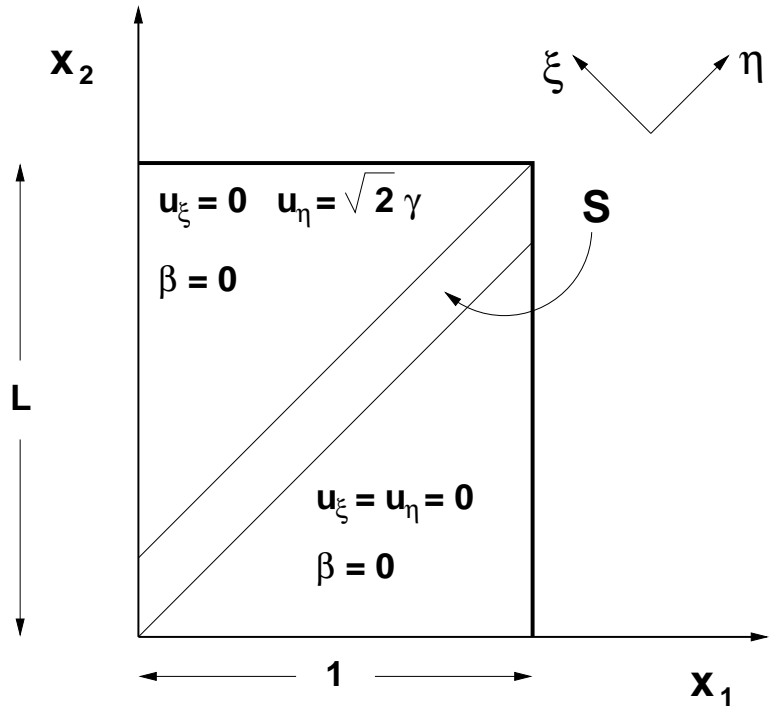}}
\caption{A minimising pair for \textbf{BC1} in the case $L>1$.}\label{fig1}
\end{figure}

\begin{proposition}
Subject to \textbf{BC1}, $L<1\Rightarrow ~ \exists~c_L>0$ such that 
\be
\frac{\gamma^2}{L}(1-L)\leq J_L\leq \min\left\{\frac{3\gamma^2}{2L}, \frac{\gamma^2}{L}(1-L) + c_L\sigma\gamma\right\}.
\ee
\label{thm2}
\end{proposition}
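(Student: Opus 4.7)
In the rotated coordinates of (\ref{varside}) the single-slip condition forces $\bar\beta_{\mathrm{sym}}$ to be a scalar multiple of $\bigl(\begin{smallmatrix}0 & 1 \\ 1 & 0\end{smallmatrix}\bigr)$ pointwise---both matrices in (\ref{varside}) have (up to the slip amplitude) this same symmetric part. Consequently the diagonal entries of $e=(\nabla v-\bar\beta)_{\mathrm{sym}}$ reduce to $v_{\xi,\xi}$ and $v_{\eta,\eta}$, independent of $\bar\beta$, which gives the pointwise bound $|e|^2\geq v_{\eta,\eta}^2$. In these coordinates \textbf{BC1} reads $v_\eta=0$ on the slanted bottom edge and $v_\eta=\sqrt 2\gamma$ on the slanted top edge; along any line $\xi=\mathrm{const}$ that joins them, the fundamental theorem of calculus gives $\int v_{\eta,\eta}\,d\eta=\sqrt 2\gamma$, and one-dimensional Cauchy--Schwarz on a slice of $\eta$-length $\sqrt 2L$ gives $\int v_{\eta,\eta}^2\,d\eta\geq\sqrt 2\gamma^2/L$. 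A short geometric check shows that precisely the lines with $\xi\in(0,(1-L)/\sqrt 2)$ cross from bottom to top, and integrating in $\xi$ then produces $\gamma^2(1-L)/L$.

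\medskip

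\textbf{Upper bounds.} The first bound uses the trivial competitor $\beta\equiv 0$, $u(x)=(\gamma x_2/L)(1,1)$, for which $|(\nabla u)_{\mathrm{sym}}|^2=3\gamma^2/(2L^2)$ integrates to $3\gamma^2/(2L)$. For the refined bound I partition $\overline\Omega_L$ along the two characteristic diagonals $x_1=x_2$ and $x_1=x_2+(1-L)$---i.e.\ $\xi=0$ and $\xi=(1-L)/\sqrt 2$, the boundaries of the middle strip identified in the lower-bound analysis---into a parallelogram $M$ and two triangular corners $T_\mathrm{L},T_\mathrm{R}$. On $M$ I keep the affine $u=(\gamma x_2/L)(1,1)$ but activate the constant first-slip $\beta=-\tfrac{\gamma}{2L}\bigl(\begin{smallmatrix}1 & -1 \\ 1 & -1\end{smallmatrix}\bigr)$, which cancels the $\bigl(\begin{smallmatrix}1 & 0 \\ 0 & -1\end{smallmatrix}\bigr)$-component of $(\nabla u)_{\mathrm{sym}}$; a matrix computation gives $|e|^2=\gamma^2/L^2$ on a set of area $L(1-L)$, contributing $\gamma^2(1-L)/L$. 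On $T_\mathrm{L}$ set $u\equiv\gamma(1,1)$ and on $T_\mathrm{R}$ set $u\equiv 0$, both with $\beta\equiv 0$: zero elastic and zero surface cost. The resulting $u$ jumps across each diagonal in the $(1,1)$ direction by an amount varying linearly from $\sqrt 2\gamma$ to $0$ along the diagonal, which is exactly a first-slip band (slip direction $(1,1)/\sqrt 2$, plane normal $(1,-1)/\sqrt 2$, and slip plane parallel to the diagonal). Regularising each diagonal to a strip of width $\delta$ and choosing $\beta$ inside to cancel the resulting $\partial v$, the elastic cost of the strip is $O(\delta)$ while the row-wise curl evaluates to the total variation $\sqrt 2\gamma$ of the slip magnitude along the band; sending $\delta\to 0$ gives $\gamma^2(1-L)/L+2\sqrt 2\sigma\gamma$, so one may take $c_L=2\sqrt 2$.

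\medskip

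\textbf{Main obstacle.} The technically delicate step is the transition-strip regularisation. A naive linear interpolation of $v_\eta$ across the strip in $\xi$ violates the top Dirichlet condition by an $O(\delta)$ amount inside the strip; I would avoid this by interpolating with the middle-region formula $(\eta-\xi)\gamma/L$, which pins the top-edge value at $\sqrt 2\gamma$ for every $\xi$. One then has to verify, using the distributional definition (\ref{curl_def}), that the $\beta_{\eta\xi}$-jumps at the two sides of the strip feed only $\partial_\xi\beta_{\eta\xi}$ and so do not register in $\int|\curl\beta|$; the entire surface contribution comes from the smooth $\eta$-dependence of the regularised slip inside the strip and totals $\sqrt 2\gamma$ per band in the $\delta\to 0$ limit.
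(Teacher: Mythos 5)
Your lower bound and your first upper bound are correct and essentially coincide with the paper's arguments: the single-slip condition kills the diagonal entries of $\beta_{\mathrm{sym}}$ in the rotated frame, so the elastic energy controls $\|\partial u_\eta/\partial\eta\|_{L^2}^2$, and integrating along the lines of constant $\xi$ joining the two clamped faces (which sweep out exactly the parallelogram of area $L(1-L)$) gives $\gamma^2(1-L)/L$ by Cauchy--Schwarz; the affine competitor gives $3\gamma^2/(2L)$. Your bulk construction for the refined upper bound is also, after a change of frame, identical to the paper's: your constant $\beta$ on $M$ is exactly $\beta_{\eta\xi}=\partial u_\eta/\partial\xi$ with $u_\xi=0$, the density $\gamma^2/L^2$ over area $L(1-L)$ is right, and so is the observation that the jumps of $\beta_{\eta\xi}$ across the strip edges feed only $\partial_\xi\beta_{\eta\xi}$ and hence cost nothing in (\ref{curl_def}).

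The gap is at the two corners $(0,0)$ and $(1,L)$, and it is not the $O(\delta)$ top-edge mismatch you flag as the main obstacle (that one is indeed harmless and fixed as you say). The slip across the diagonal $x_1=x_2$ is $\sqrt{2}\gamma(1-x_2/L)$, which does not vanish at the lower end of the band: the band emerges at the clamped corner $(0,0)$ carrying the full slip $\sqrt{2}\gamma$. Your regularised strip therefore meets the boundary $\{x_2=0\}$ in a segment of length $O(\delta)$ on which the interpolated $u_\eta$ takes values of order $\gamma$, violating $u=0$ there by an $O(\gamma)$, not $O(\delta)$, amount (symmetrically for the other band at $(1,L)$). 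If you repair this by interpolating $u_\eta$ down to zero over a corner region of size $\delta$, the component $\partial_\eta u_\eta\sim\gamma/\delta$, which cannot be absorbed by an admissible $\beta$, costs $\sim(\gamma/\delta)^2\delta^2=C\gamma^2$ of elastic energy independently of $\delta$, by scale invariance of the Dirichlet integral; this adds a quadratic term whose constant has nothing to do with $(1-L)/L$ and destroys the claimed bound $\gamma^2(1-L)/L+c_L\sigma\gamma$ for large $\gamma/\sigma$ (cf.\ the Remark after Proposition \ref{thm3}). The missing idea is the paper's treatment of the corner triangle $A$: replace the sharp transition by a profile of type $\xi^{\alpha}$, whose $H^{1/2}$ trace norm tends to zero as $\alpha\to0$ (Lemma \ref{lemma1}), and invoke continuity of the harmonic-extension operator from $H^{1/2}$ to $H^1$ to make the corner elastic energy arbitrarily small, at the price of only an additional $O(\sigma\gamma)$ of surface energy. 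Without this (or an equivalent device) the refined upper bound does not follow from your construction.
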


\begin{proof}
For the lower bound, fix $L<1$, and choose a pair $(u,\beta)$ satisfying \textbf{BC1}. By (\ref{E_L}) and (\ref{varside}), we immediately get
\be
\left\|\frac{\p u_{\eta}}{\p\eta}\right\|^2_{L^2(\overline{\Omega}_L)} +\quad\left\|\frac{\p u_{\xi}}{\p\xi}\right\|^2_{L^2(\overline{\Omega}_L)}\leq \overline{E}_L(u,\beta).\label{diag_est}
\ee 

Referring to Figure \ref{fig2}, we fix $\xi$ such that the dotted line $l_{\xi}$ (of constant $\xi$) lies in the region $R\subset\overline{\Omega}_L$ which is bounded by the two solid diagonals. Now integrate $\frac{\p u_{\eta}}{\p\eta}$ along $l_{\xi}$ from $u_{\eta}=0$ to $u_{\eta}=\sqrt{2}\gamma$ to get

\begin{figure}
\centering
\resizebox{4in}{2.8in}{\includegraphics{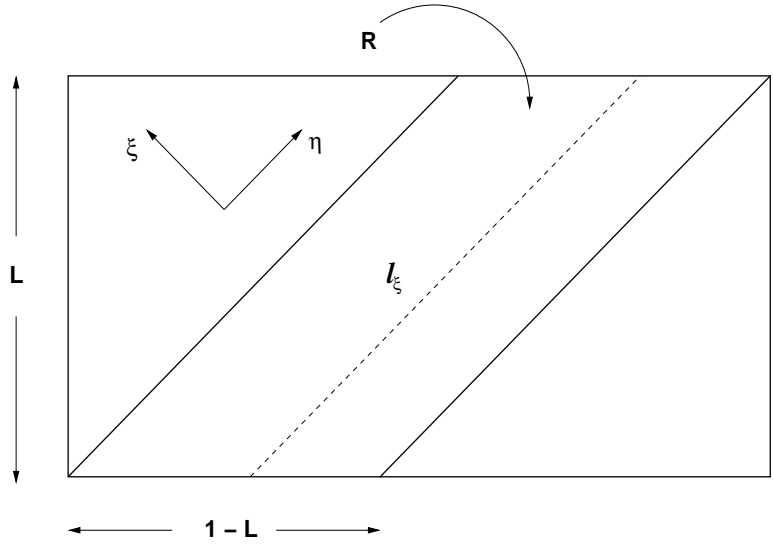}}
\caption{The line $l_{\xi}$ and the region $R$ used in the lower-bound proof of Proposition \ref{thm2}.}\label{fig2}
\end{figure}

\be
\sqrt{2}\gamma = \int_{l_{\xi}}\frac{\p u_{\eta}}{\p\eta}~d\eta,
\ee
and integrate over all such $l_\xi\subset R$, to arrive at

\be
\gamma (1-L) = \iint_R \frac{\p u_{\eta}}{\p\eta}~d\eta d\xi.
\ee

Using $|R|=L(1-L)$, along with the usual $L^2\hookrightarrow L^1$ embedding, we get

\be
\gamma(1-L)\leq\left\|\frac{\p u_{\eta}}{\p\eta}\right\|_{L^1(R)}\leq (L(1-L))^{\frac{1}{2}}\left\|\frac{\p u_{\eta}}{\p\eta}\right\|_{L^2(R)}.
\ee

Hence, by (\ref{diag_est}), $\overline{E}_L(u,\beta)\geq \gamma^2(1-L)/L$, as required.

For the upper bound, first note that the first member of the upper-bound set is attained by the minimising (linear) purely elastic deformation. Next, we get arbitrarily close to the second member with the following construction, which is depicted in Figure \ref{fig13}.

Divide $\overline{\Omega}_L$ into five regions: $R_{\epsilon}$ (a slightly narrower version of $R$ from Figure \ref{fig2}), two transition layers of width $\epsilon$ centred on the corner-diagonals of constant $\xi$, and two triangles which surround the $R_{\epsilon}$/transition-layer sandwich. 

\begin{figure}
\centering
\resizebox{4in}{2.8in}{\includegraphics{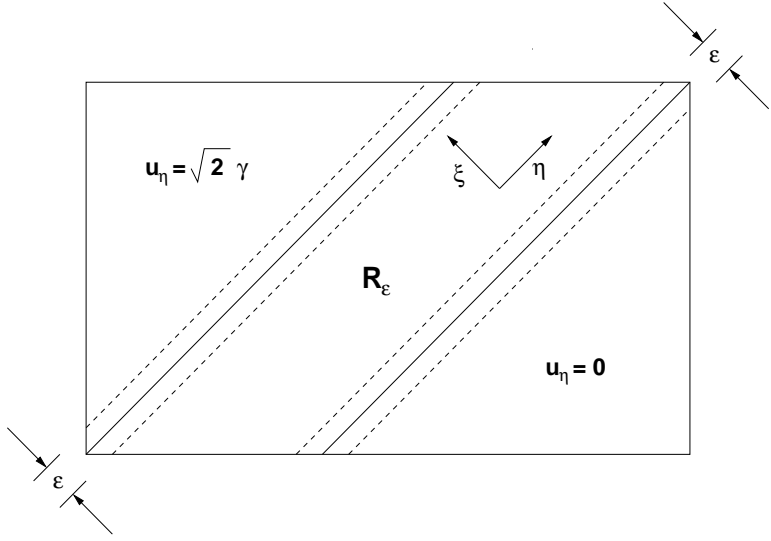}}
\caption{Construction used in the upper-bound proof of Proposition \ref{thm2}.}\label{fig13}
\end{figure}

Set $u_{\xi}=0$ on $\overline{\Omega}_L$, and, on $R_{\epsilon}$, determine $u_{\eta}$, for each fixed $\xi$, by linearly interpolating between the Dirichlet conditions at $x_2=0$ and $x_2=L$, and set $\beta_{\eta\xi}=\frac{\p u_{\eta}}{\p\xi}$. On the upper and lower triangles take $\beta=0$ and, respectively, $u_{\eta}=\sqrt{2}\gamma$ and $u_{\eta}=0$. This gives a total elastic energy away from the transition layers amounting to $\frac{\gamma^2}{L}(1-L-\sqrt{2}\epsilon)$, and zero plastic energy. 

On the transition layers, $(u,\beta)$ can be chosen such that the elastic energy is small, and the plastic energy is $\mathcal{O}(\sigma\gamma)$. To be specific, divide the upper transition layer into a small triangle, $A$, in the bottom left corner of $\overline{\Omega}_L$, and a narrow strip, $B$ (see Figure \ref{fig14}a). Then, by analogy with the proof of Proposition \ref{thm3} (see below), we can make the energy in $A$ arbitrarily small, as follows. Thus, take a function $u_{\eta}^{\beta}(\xi)$ with (piecewise) $\xi^{\alpha}$-shape, representing a steep transition from $0$ to $\frac{\gamma}{\sqrt{2}}$ along the diagonal boundary of $A$. Then set $\beta=\nabla u_{\eta}^{\beta}$, put $v=u_{\eta}-u_{\eta}^{\beta}$ and minimise $\|\nabla v\|_{L^2(A)}$, subject to $u_{\eta}=0$ on $x_2=0$ and $u_{\eta}=u_{\eta}^{\beta}$ on the diagonal boundary of $A$. This can be done by reflecting $A$ about its vertical boundary and solving the Laplace equation for $v$ on the resulting domain, which gives arbitrarily small 
energy as $\alpha\ra 0$.

Meanwhile, on $B$, interpolate $u_{\eta}$ between the $u_{\eta}^{\beta}$-profile at the boundary with $A$ and the Dirichlet condition at $x_2=L$, for each fixed $\xi$, and accommodate the resulting twist by setting $\beta_{\eta\xi}=\frac{\p u_{\eta}}{\p\xi}$.

The result of this is that we have, for small enough $\alpha(\epsilon)$, $\mathcal{O}(\epsilon)$ elastic energy and $\mathcal{O}(\sigma\gamma)$ plastic energy 
on the upper transition layer. Clearly, an analogous construction works on the lower transition layer, and hence, letting $\epsilon\ra 0$, the desired upper bound follows.

\begin{figure}
\centering
\resizebox{4in}{1.9in}{\includegraphics{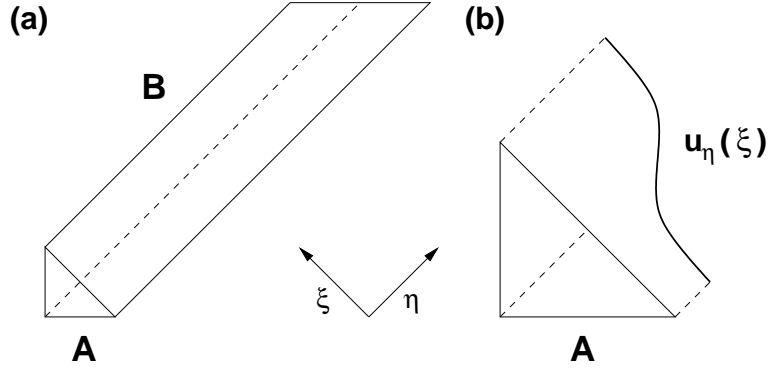}}
\caption{Construction used in the upper-bound proof of Proposition \ref{thm2}. \textbf{(a)} The upper transition layer, comprising regions $A$ and $B$, \textbf{(b)} Profile of $u_{\eta}$ on the diagonal boundary of $A$.}\label{fig14}
\end{figure}

\end{proof}

An interesting question is whether $J_{L}$ jumps from zero to a strictly positive number as $L$ decreases through unity: we now answer this question in the negative.

\begin{proposition}
Subject to \textbf{BC1}, $J_1=0$, and hence, by Proposition \ref{left-cont}, $J_{L}$ is continuous at $L=1$.\label{thm3} 
\end{proposition}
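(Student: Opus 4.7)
The plan is to exhibit a single admissible pair $(u,\beta)$ on $\overline{\Omega}_1$ satisfying \textbf{BC1} with $\overline{E}_1(u,\beta) = 0$, which gives $J_1\le 0$ and hence $J_1=0$; continuity at $L=1$ then follows from Proposition \ref{left-cont} together with the trivial bound $J_L = 0$ for $L > 1$ from Proposition \ref{thm1}. The candidate is the vanishing-band-width limit of the construction of Proposition \ref{thm1}: all of the slip the boundary conditions force through the specimen is concentrated on the single slip line joining $(0,0)$ to $(1,1)$, i.e.\ the main diagonal. Explicitly, set
\[
 u(x_1,x_2) = \gamma(1,1)\,\chi_{\{x_2>x_1\}},\qquad \beta = Du,
\]
so that $u$ is a bounded BV displacement jumping by $\gamma(1,1)$ across the main diagonal, and $\beta$ is a matrix-valued Radon measure supported on that diagonal, of constant density along it and, at each of its points, proportional to the first slip matrix in \eqref{side}.

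The verification consists of four quick checks. First, the trace of $u$ on $\{x_2=0\}$ equals $\gamma(1,1)\chi_{\{x_1<0\}}=0$ a.e.\ on $(0,1)$, and on $\{x_2=1\}$ it equals $\gamma(1,1)\chi_{\{x_1<1\}}=\gamma(1,1)$ a.e., so \textbf{BC1} holds in the sense of traces. Second, the absolutely continuous part of $\beta$ vanishes, so the pointwise a.e.\ condition \eqref{side} is trivially satisfied. Third, $\nabla u-\beta=0$ as distributions by construction, so $(\nabla u-\beta)_{\mathrm{sym}}$ is the zero $L^2$ function and the elastic term in $\overline{E}_1$ is zero.

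The only genuine step is the curl. In the rotated coordinates $(\xi,\eta)$ used in \eqref{varside}, the main diagonal becomes the segment $\{\xi=0,\ \eta\in[0,\sqrt 2]\}$, whose endpoints correspond precisely to the corners $(0,0)$ and $(1,1)$ of $\overline{\Omega}_1$. In these coordinates $\beta_{\xi\eta}=0$ and $\beta_{\eta\xi}=-\sqrt{2}\,\gamma\,\delta(\xi)$, so for any admissible test pair $(\phi,\psi)\in C_0^1(\overline{\Omega}_1,[-1,1])^2$ the defining integral in \eqref{curl_def} reduces to
\[
 -\sqrt{2}\,\gamma\int_0^{\sqrt 2}\psi_{,\eta}(0,\eta)\,d\eta = -\sqrt{2}\,\gamma\bigl[\psi(0,\sqrt 2)-\psi(0,0)\bigr] = 0,
\]
since $\psi$ vanishes on $\partial\overline{\Omega}_1$, which contains both $(0,0)$ and $(1,1)$. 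Taking the supremum gives $\int|\nabla\times\beta|=0$, and hence $\overline{E}_1(u,\beta)=0$.

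The main obstacle, and the entire physical content of the argument, is this final curl computation: a single dislocation line carrying constant Burgers density produces no interior GND measure, precisely because the two endpoints of the line---where the GND charge would otherwise concentrate---sit on the boundary of the specimen and are therefore invisible to the test functions in \eqref{curl_def}. Were either endpoint pushed into the interior of $\overline{\Omega}_1$, the two end-$\delta$'s would reappear and the curl energy would be of order $\gamma$. This is exactly why $L=1$ is borderline: it is the unique aspect ratio at which a slip line in one of the allowed slip directions joins both Dirichlet edges without leaving an interior endpoint, consistently with Proposition \ref{thm2} forcing $J_L>0$ as soon as $L<1$ and with the construction limiting continuously into the bands of Proposition \ref{thm1} as soon as $L>1$.
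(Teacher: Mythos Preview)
Your candidate $(u,\beta)$ is not in the admissible class. In the paper's set-up $\beta$ is a \emph{function} $\beta\colon\overline{\Omega}_L\to\mathbb{R}^{2\times 2}$ (indeed the curl in \eqref{curl_def} is defined ``for $\beta\in\mathrm{BV}$, say'', and the side condition \eqref{side} is imposed pointwise a.e.), whereas your $\beta=Du$ is a purely singular matrix-valued Radon measure supported on the diagonal. Once $\beta$ is forced to be a function, you can no longer take $\nabla u-\beta=0$ with a jump discontinuity in $u$; either $\nabla u-\beta$ fails to be in $L^2$, or $\beta$ fails to be a function. So the ``single admissible pair with zero energy'' does not exist, and the result genuinely requires an infimising sequence.

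The natural attempt to repair this---replace the slip line by a slip band of width $\epsilon$ as in Proposition~\ref{thm1} and send $\epsilon\to 0$---runs into the real obstacle: at $L=1$ any such band necessarily overlaps the Dirichlet boundaries near the corners $(0,0)$ and $(1,1)$, so the interpolated $u$ violates \textbf{BC1} on two small intervals of length $\sim\epsilon$. Correcting $u$ back to the boundary data on those intervals costs elastic energy, and showing that this correction can be made $o(1)$ is the whole content of the proposition. The paper does this via a steep transition profile $g_\alpha(\xi)$ of $x^\alpha$-type together with Lemma~\ref{lemma1}, which says that $1-|x|^\alpha$ has $H^{1/2}$-norm tending to zero as $\alpha\to 0$ despite having $L^\infty$-norm one; continuity of the Dirichlet solution operator $H^{1/2}(\partial\Omega)\to H^1(\Omega)$ then makes the elastic correction arbitrarily small. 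Your argument correctly identifies the limiting object and the geometric reason why the endpoint GND charges disappear at $L=1$, but it skips precisely this approximation step, which is where the difficulty lies.
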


To prove Proposition \ref{thm3}, we need the aid of the following Lemma.
\begin{lemma}
The family of functions $f_{\alpha}(x) = 1-x^{\alpha},~\alpha\in(0,1]$, lies in $H^{\frac{1}{2}}(0,1)$, and $\|f_{\alpha}\|_{H^{\frac{1}{2}}(0,1)}\searrow 0$ as $\alpha\ra 0^+$. In particular, $H^{\frac{1}{2}}(0,1)$ is not embedded in $L^{\infty}(0,1)$, since $\|f_{\alpha}\|_{L^{\infty}(0,1)}=1$ for $\alpha\in(0,1]$. Moreover, the reflected functions $\bar{f}_{\alpha}(x) = 1 - (\textrm{sgn}(x)x)^{\alpha}$ lie in $H^{\frac{1}{2}}(-1,1)$, and also $\|\bar{f}_{\alpha}\|_{H^{\frac{1}{2}}(-1,1)}\searrow 0$ as $\alpha\ra 0+$.\label{lemma1}
\end{lemma}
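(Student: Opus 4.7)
The plan is to work with the Gagliardo--Slobodeckij expression
\[
\|f\|_{H^{1/2}(0,1)}^2 = \|f\|_{L^2(0,1)}^2 + \int_0^1\!\!\int_0^1 \frac{|f(x)-f(y)|^2}{|x-y|^2}\,dx\,dy,
\]
and the single elementary estimate that powers everything is the identity $1 - u^\alpha = -\int_0^\alpha u^t \ln u\,dt$, valid for $u \in (0,1)$, which immediately yields the pointwise inequality $0 \le 1-u^\alpha \le \alpha\,|\ln u|$. Applied with $u = x$ this already handles the $L^2$ piece: $\|f_\alpha\|_{L^2}^2 \le \alpha^2 \int_0^1 (\ln x)^2\,dx = 2\alpha^2$.

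For the seminorm, I would symmetrise to the triangle $0<y<x<1$ and substitute $y = xu$, $u\in(0,1)$, with $dy = x\,du$. The integrand then factors as $x^{2\alpha-1}\bigl((1-u^\alpha)/(1-u)\bigr)^2$, the variables separate, and
\[
[f_\alpha]_{H^{1/2}(0,1)}^2 = 2\int_0^1 x^{2\alpha-1}\,dx \cdot \int_0^1 \frac{(1-u^\alpha)^2}{(1-u)^2}\,du = \frac{1}{\alpha}\int_0^1 \frac{(1-u^\alpha)^2}{(1-u)^2}\,du.
\]
The radial factor $1/\alpha$ is singular as $\alpha\to 0^+$, but applying the logarithmic bound inside the angular integral yields $(1-u^\alpha)^2/(1-u)^2 \le \alpha^2 (\ln u)^2/(1-u)^2$, whose integral $C := \int_0^1 (\ln u)^2/(1-u)^2\,du$ is finite (a bounded limit at $u=1$ since $\ln u \sim -(1-u)$, and at worst logarithmic growth at $u=0$). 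Therefore $[f_\alpha]^2 \le C\alpha$, and together with the $L^2$ bound this gives $\|f_\alpha\|_{H^{1/2}(0,1)} \to 0$. The non-embedding in $L^\infty$ is then automatic, since $\|f_\alpha\|_{L^\infty} = \lim_{x\to 0^+} f_\alpha(x) = 1$ for every $\alpha\in(0,1]$.

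For the reflected functions on $(-1,1)$, I would split the double integral into the four quadrants $(\pm,\pm)$. The two same-sign quadrants each reproduce $[f_\alpha]^2_{H^{1/2}(0,1)}$ by the even symmetry of $\bar f_\alpha$, while the two mixed-sign quadrants become, under $y \mapsto -y$, integrals over $(0,1)^2$ with numerator $(x^\alpha - y^\alpha)^2$ but denominator $(x+y)^2 \ge (x-y)^2$; they are therefore dominated by the same $[f_\alpha]^2$. Combined with the $L^2$ part (which merely doubles), I obtain $\|\bar f_\alpha\|_{H^{1/2}(-1,1)}^2 \le 4\|f_\alpha\|_{H^{1/2}(0,1)}^2$, closing the argument.

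The one subtle point is the cancellation $\alpha^{-1} \cdot \alpha^2 = \alpha$ in the seminorm: the radial integral $\int_0^1 x^{2\alpha-1}\,dx = 1/(2\alpha)$ diverges as $\alpha \to 0^+$, so a crude H\"older-type bound such as $|x^\alpha - y^\alpha| \le |x-y|^\alpha$ is not enough. One must exploit the logarithmic improvement in the angular factor to recover an $\alpha^2$ and overwhelm the radial singularity; all other steps are routine bookkeeping.
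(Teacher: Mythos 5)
Your proof is correct, but it takes a genuinely different route to the key seminorm estimate than the paper does. The paper treats $Q(\alpha)=\int_0^1\int_0^1 (x^{\alpha}-y^{\alpha})^2(x-y)^{-2}\,dx\,dy$ by a soft limiting argument: it claims the integrand is pointwise nondecreasing in $\alpha$ (via the sign of $\partial_\alpha(x^{\alpha}-y^{\alpha})^2$), deduces domination by the $\alpha=1$ case, and concludes $Q(\alpha)\to 0$ by dominated convergence, with no rate. You instead compute the seminorm explicitly: the substitution $y=xu$ separates variables into the radial factor $\int_0^1 x^{2\alpha-1}\,dx=1/(2\alpha)$ and the angular factor $\int_0^1(1-u^{\alpha})^2(1-u)^{-2}\,du$, and the identity $1-u^{\alpha}=-\int_0^{\alpha}u^t\ln u\,dt$ gives the bound $(1-u^{\alpha})^2\le\alpha^2(\ln u)^2$, whence $[f_{\alpha}]^2_{H^{1/2}}\le C\alpha$ with $C=\int_0^1(\ln u)^2(1-u)^{-2}\,du=\pi^2/3<\infty$. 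This buys you a quantitative rate $O(\sqrt{\alpha})$ for the norm, and it is also more robust: the paper's pointwise monotonicity claim actually fails for small $x,y$ (e.g.\ $x=0.1$, $y=0.01$, $\alpha=1$ gives $x^{\alpha}\ln x-y^{\alpha}\ln y<0$ while $x^{\alpha}-y^{\alpha}>0$), and the cruder bound $|x^{\alpha}-y^{\alpha}|\le|x-y|^{\alpha}$ does not supply an integrable dominating function either, so your explicit cancellation $\alpha^{-1}\cdot\alpha^2$ is exactly the right fix; you correctly flag this as the one subtle point. Your treatment of $\bar f_{\alpha}$ (four quadrants, $(x+z)^2\ge(x-z)^2$ on the cross terms) is essentially identical to the paper's. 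The only part of the literal statement you do not address is the monotone decrease asserted by the symbol $\searrow$; since only convergence to zero is used downstream (in Proposition~\ref{thm3}), this is harmless, but if you want the full claim you would need a separate argument, as your bound $[f_{\alpha}]^2\le C\alpha$ controls size, not monotonicity.
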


\begin{proof}
Clearly, $f_{\alpha}\in L^2$, $\|f_{\alpha}\|_{L^2}\searrow 0$ and $\|f_{\alpha}\|_{L^{\infty}}=1$. Now, turning to the $H^{\frac{1}{2}}$ semi-norm, we define
\be
Q(\alpha)=\left|f_{\alpha}\right|_{H^{\frac{1}{2}}} = \int_0^1\int_0^1\frac{(x^{\alpha}-y^{\alpha})^2}{(x-y)^2}~dxdy,\qquad P(\alpha,x,y) = (x^{\alpha}-y^{\alpha})^2,
\ee
for $(\alpha, x, y)\in (0,1)^3$.
Thus, $Q(0)=0, Q(1)=1$, and $P(0,\cdot,\cdot)=0$. Moreover,

\be
\frac{\p P}{\p\alpha} = 2(x^{\alpha}-y^{\alpha})(x^{\alpha}\ln{x}-y^{\alpha}\ln{y})\geq 0,
\ee
so that $P(\cdot, x, y)$ is increasing, and clearly $P(\alpha, x, y)\ra P(0, x, y)=0$ as $\alpha\ra 0^+$. Thus, by dominated convergence, $Q(\alpha)\ra 0$ as $\alpha\ra 0^+$, as required. 

For $\bar{f}_{\alpha}$, the semi-norm integral can be split into two `diagonal' terms, which can be handled as above, and two `cross' terms, given by
\be
\int_{-1}^0\int_0^1\frac{(x^{\alpha}-(\textrm{sgn}(y)y)^{\alpha})^2}{(x-y)^2}~dxdy = \int_0^1\int_0^1\frac{(x^{\alpha}-z^{\alpha})^2}{(x + z)^2}~dxdz,
\ee
where we changed variables via $z=-y$ in the final equality. Clearly, the last integral is smaller than $Q(\alpha)$, and increasing w.r.t. $\alpha$, which gives the required result.
\end{proof}

\begin{proof}[Proof of Proposition \ref{thm3}]
Here it will be convenient to align the rotated $(\xi,\eta)$ co-ordinates as in Figure \ref{fig3}, with the origin taken to be at the centre of $\overline{\Omega}_1$.

\begin{figure}
\centering
\resizebox{3in}{3in}{\includegraphics{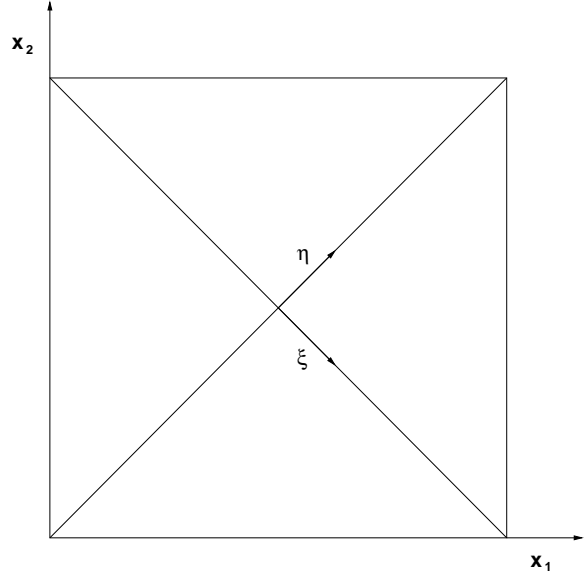}}
\caption{Orientation of the $(\xi,\eta)$ co-ordinates on $\overline{\Omega}_1$ used in the proof of Proposition \ref{thm3}.}\label{fig3}
\end{figure}

Given this, consider the sigmoid function $g_{\alpha}(\xi)$ defined by
\be
g_{\alpha}(\xi) = \frac{\gamma}{\sqrt{2}}\left(1-\textrm{sgn}(\xi)\left(\textrm{sgn}(\xi)\sqrt{2}\xi\right)^{\alpha}\right)
\ee
on the interval $(-1/\sqrt{2},1/\sqrt{2})$, which represents a steep transition from $0$ to $\sqrt{2}\gamma$ (see Figure \ref{fig4}).

\begin{figure}
\centering
\resizebox{3in}{2.5in}{\includegraphics{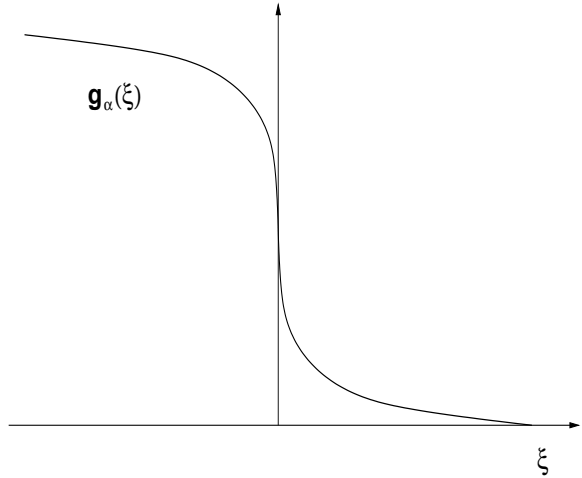}}
\caption{The function $g_{\alpha}(\xi)$ used in the proof of Proposition \ref{thm3}.}\label{fig4}
\end{figure}

Now define, in the $(\xi,\eta)$-frame, $u^{\beta}=(0,g_{\alpha}(\xi))$, and put 
\be
\beta=\nabla u^{\beta}=\left(
\begin{array}{cc}
0 & 0\\
g^{\prime}_{\alpha}(\xi) & 0
\end{array}
\right)
\ee
Thus, $\beta$ is curl-free, and, for a general test function $u$, we define $v=u-u^{\beta}$ and set $v_{\xi}=0$ ($\Leftrightarrow u_{\xi}=0$, consistent with \textbf{BC1}). The aim is now to show that the minimum of $\|\nabla v_{\eta}\|_{L^2}$, subject to the boundary conditions
\be
v_{\eta}(\xi,\eta) = \left\{
\begin{array}{rcc}
-g_{\alpha}(\xi) & : & x_2=0\\
\sqrt{2}\gamma - g_{\alpha}(\xi) & : & x_2=1
\end{array}
\right.,
\ee 
tends to zero as $\alpha\ra 0^+$: this will be achieved by using the continuity of the (Dirichlet) solution operator to Laplace's equation, $\Delta v_{\eta}=0$.
 
In order to do this, it is convenient to consider reflected (about $x_1=0$) Dirichlet data on the double of $\overline{\Omega}_1$, with resulting lateral boundaries then identified to make a cylinder. Thus, we work on the manifold $\widetilde{\Omega}_1=\mathbb{S}^1\times (0,1)$, with boundary $\mathbb{S}^1\sqcup\mathbb{S}^1$, and the Dirichlet data for $v_{\eta}$ on each boundary $\mathbb{S}^1$ have the same shape as $\bar{f}_{\alpha}$ from Lemma \ref{lemma1} (see Figure \ref{fig5}). By the symmetry of this data, and invariance of $\Delta v_{\eta}=0$ under reflections, we see that solving the Dirichlet problem on $\widetilde{\Omega}_1$, for a function still denoted by $v_{\eta}$, gives a solution of the mixed Dirichlet/Neumann problem on $\overline{\Omega}_1$. Now, Proposition 1.7 on p.307 of \cite{taylor} tells us that the solution map to the Dirichlet problem on $\widetilde{\Omega}_1$ takes $H^{\frac{1}{2}}(\p\widetilde{\Omega}_1)$ continuously into $H^1(\widetilde{\Omega}_1)$, while our Lemma \ref{lemma1} 
gives, modulo trivial 
scalings applied to $\bar{f}_{\alpha}$, $\|\textrm{Tr}(v_{\eta})\|_{H^{\frac{1}{2}}(\p\widetilde{\Omega}_1)}\ra 0$ as $\alpha\ra 0^+$, as desired.
\end{proof}

\begin{remark}
In the above construction, if we use a piecewise-linear transition function with 3-pieces, rather than $g_{\alpha}$, then we get an $\mathcal{O}(\gamma^2)$ amount of energy, as can easily be checked, using the trace inequality. 
\end{remark}

\begin{figure}
\centering
\resizebox{3in}{3in}{\includegraphics{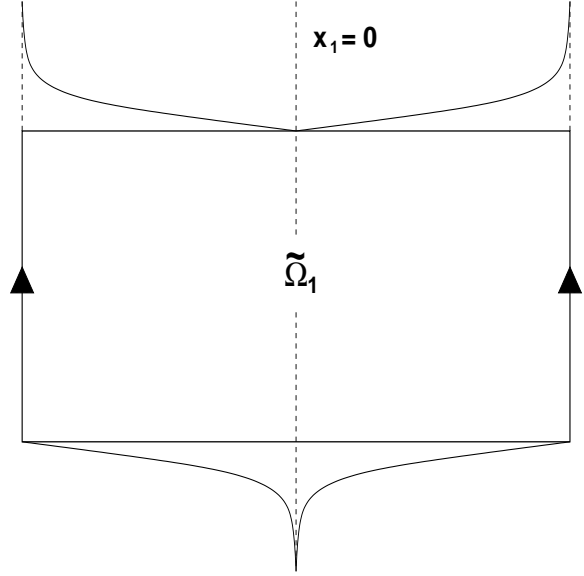}}
\caption{Shape of the Dirichlet data on $\widetilde{\Omega}_1$ used in the proof of Proposition \ref{thm3}.}\label{fig5}
\end{figure}

\subsection{Horizontal shear}

In this section we minimise $\overline{E}_L$ subject to the horizontal-shear condition \textbf{BC2}. In a $(\xi,\eta)$-frame aligned as in Figure \ref{fig1}, the Dirichlet condition at $x_2=L$ is $(u_{\xi},u_{\eta})=\gamma(-1/\sqrt{2},1/\sqrt{2})$, while $(u_{\xi},u_{\eta})=(0,0)$ at $x_2=0$.

\begin{proposition}
Subject to \textbf{BC2}, $L>2\Rightarrow J_L=0$.\label{thm4} 
\end{proposition}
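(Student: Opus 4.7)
My plan is to produce an explicit zero-energy test pair $(u,\beta)$ satisfying \textbf{BC2} whenever $L>2$, generalising the single-band construction of Proposition~\ref{thm1} by stacking two bands of different slip type. The key observation is that, in the rotated frame, the \textbf{BC2} datum at $x_2=L$ reads $\gamma(-1/\sqrt{2},1/\sqrt{2})$, and this can be obtained from $(0,0)$ via a pure $\xi$-shift by $-\gamma/\sqrt{2}$ followed by a pure $\eta$-shift by $\gamma/\sqrt{2}$; each of these incremental translations is supplied by one of the two available slip systems in~(\ref{varside}), operating independently in its own horizontal sub-layer.

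Concretely, set $\Omega_{\mathrm{bot}}=(0,1)\times(0,L/2)$ and $\Omega_{\mathrm{top}}=(0,1)\times(L/2,L)$; the assumption $L>2$ gives each sub-layer height $L/2>1$, which is exactly the threshold in Proposition~\ref{thm1}. In $\Omega_{\mathrm{bot}}$ I would place a thin strip $S_1$ of width $\epsilon$ aligned along the $\xi$-axis (a slope-$(-1)$ line in $(x_1,x_2)$), stretching from $x_1=0$ to $x_1=1$ while strictly avoiding $x_2=0$ and $x_2=L/2$; on $S_1$ set $\beta$ to equal the second (right) matrix of~(\ref{varside}) with scalar prefactor $-\gamma/(\sqrt{2}\,\epsilon)$, and $\beta=0$ elsewhere in $\Omega_{\mathrm{bot}}$. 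Then define $u$ so that $(u_\xi,u_\eta)=(0,0)$ below $S_1$, $(u_\xi,u_\eta)=(-\gamma/\sqrt{2},0)$ above $S_1$, with $u_\xi$ interpolating linearly in $\eta$ across $S_1$, giving $\nabla u=\beta$ pointwise. In $\Omega_{\mathrm{top}}$ I would perform the mirror construction using the first matrix of~(\ref{varside}) on a strip $S_2$ of width $\epsilon$ aligned along the $\eta$-axis (slope $+1$), with scalar prefactor $\gamma/(\sqrt{2}\,\epsilon)$, and $u=(-\gamma/\sqrt{2},0)$ below $S_2$, $u=(-\gamma/\sqrt{2},\gamma/\sqrt{2})$ above $S_2$.

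Three checks then finish the argument. (i) $(\nabla u-\beta)_{\mathrm{sym}}\equiv 0$ by construction, so the elastic term vanishes. (ii) The two sub-layer constructions agree at $x_2=L/2$, both giving $(u_\xi,u_\eta)=(-\gamma/\sqrt{2},0)$, and \textbf{BC2} is satisfied: $u|_{x_2=0}=0$, while $u|_{x_2=L}=(-\gamma/\sqrt{2},\gamma/\sqrt{2})$ in rotated coordinates converts to $\gamma(1,0)$ in $(x_1,x_2)$-coordinates. (iii) The GND term vanishes: each $\beta$ is a constant single-slip matrix supported on a strip whose interior edges (within $\Omega_L$) are lines along the slip direction of that $\beta$; on substituting into~(\ref{curl_def}) and applying Fubini in the slice variable transverse to this direction, the inner integral reduces to differences of test-function values at points on $\p\Omega_L$, which vanish by compact support.

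The only slightly delicate step is (iii), ruling out atomic contributions to the distributional curl from the jumps of $\beta$ along the lateral edges of $S_1$ and $S_2$ inside $\Omega_L$. This is automatic because, for $S_1$, the active slip component $\beta_{\xi\eta}$ is paired in~(\ref{curl_def}) with a $\p_\xi$-derivative taken along the tangent direction of the constant-$\eta$ jump set and hence seeing no jump; symmetrically, $\beta_{\eta\xi}$ on $S_2$ is paired with $\p_\eta$, acting along constant-$\xi$ lines. The threshold $L>2$ enters only via the geometric condition that each half-layer accommodate a single diagonal strip, exactly as in Proposition~\ref{thm1}.
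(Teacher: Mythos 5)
Your construction is correct and is essentially the paper's own proof: two non-intersecting, full-width diagonal shear bands, one for each slip system in (\ref{varside}), each gradient-compatible ($\nabla u=\beta$), curl-free because the active component of $\beta$ is constant along the direction differentiated in (\ref{curl_def}), and each avoiding the Dirichlet faces, with $L>2$ providing exactly the room needed. The only (immaterial) difference is packaging: you confine thin $\epsilon$-bands to the two horizontal half-layers, whereas the paper uses the maximal-width bands $S_1=\{1+L/2\le x_1+x_2\le L\}$ and $S_2=\{0\le x_2-x_1\le L/2-1\}$, which touch at a single point of the lateral boundary.
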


\begin{proof}
We use basically the same construction as in the proof of Proposition \ref{thm1}, except that here, since $L>2$, there is enough room to fit two alternating, non-intersecting shear bands into $\overline{\Omega}_{L}$, each of which misses the Dirichlet boundary (see Figure \ref{fig6}). 

To be explicit, we take two shear bands, given by $S_1=\{x_1+x_2\leq L\}\cap\{x_1+x_2\geq 1+L/2\}$ and $S_2=\{x_2-x_1\geq 0\}\cap\{x_2-x_1\leq -1+L/2\}$, which just touch at the lateral boundary, $x_1=1$. Above $S_1$ set $u_{\xi}=-\gamma/\sqrt{2}, u_{\eta}=\gamma/\sqrt{2}, \beta=0$, between $S_1$ and $S_2$ set $u_{\xi}=0, u_{\eta}=\gamma/\sqrt{2}, \beta=0$, and below $S_2$ put $u_{\xi}=u_{\eta}=\beta=0$. Then, linearly interpolating $u$ across $S_1$ and $S_2$, and requiring that $\nabla u=\beta$ on $S_1\cup S_2$ clearly gives a pair of test functions which satisfy \textbf{BC2}, and for which $\overline{E}_L=0$.  
\end{proof}

\begin{figure}
\centering
\resizebox{2.25in}{3.15in}{\includegraphics{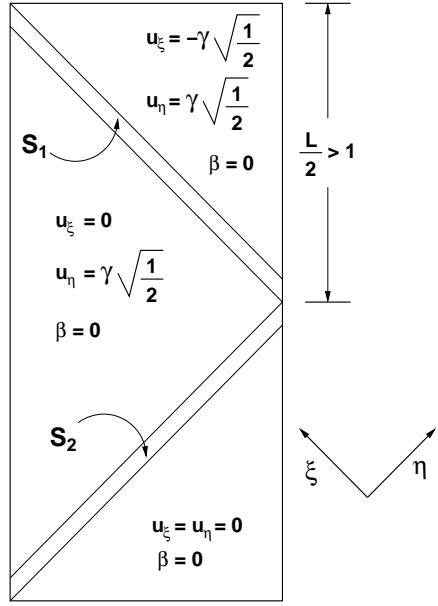}}
\caption{Zero-energy construction for Proposition \ref{thm4}.}\label{fig6}
\end{figure}

A certain amount of energy is required to produce a horizontal shear when $L$ is sufficiently small, which is the content of the following two propositions.

\begin{proposition}
Subject to \textbf{BC2}, $L<1\Rightarrow\exists~c_{L}>0$ such that 
\be
\label{horiz_bd}
\frac{\gamma^2}{2L}(1-L)\leq J_{L}\leq\min\left\{\frac{\gamma^2}{2L}, \frac{\gamma^2}{2L}(1-L) + c_L\sigma\gamma\right\}.    
\ee
\label{thm5}
\end{proposition}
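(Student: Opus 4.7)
The approach parallels that of Proposition \ref{thm2}, adapted to the \textbf{BC2} geometry in which both $u_\xi$ and $u_\eta$ undergo Dirichlet jumps of magnitude $\gamma/\sqrt{2}$ from bottom to top.

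For the lower bound, I start from the identity $\|\p u_\eta/\p\eta\|^2_{L^2(\overline{\Omega}_L)} + \|\p u_\xi/\p\xi\|^2_{L^2(\overline{\Omega}_L)} \leq \overline{E}_L(u,\beta)$ of (\ref{diag_est}), which is a direct consequence of (\ref{E_L}) and the side condition (\ref{varside}). Integrating $\p u_\eta/\p\eta$ along any line $l_\xi$ of constant $\xi$ that joins the top and bottom Dirichlet edges yields the fixed jump $\gamma/\sqrt{2}$, exactly as in Proposition \ref{thm2}. Integrating once more over the middle-$\xi$ range of width $(1-L)/\sqrt{2}$, whose associated parallelogram $R$ has area $L(1-L)$, and applying Cauchy--Schwarz as in that proof, gives $\|\p u_\eta/\p\eta\|^2_{L^2(R)} \geq \gamma^2(1-L)/(4L)$. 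The $\xi\leftrightarrow\eta$ symmetric statement, applied to $\p u_\xi/\p\xi$ on the mirror parallelogram $R'$ using the BC2 jump of $u_\xi$, contributes a matching $\gamma^2(1-L)/(4L)$; summation produces the claimed $\gamma^2(1-L)/(2L)$.

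The upper bound is proved by exhibiting two independent test pairs, one for each term in the $\min$. The purely elastic pair $u(x) = (\gamma x_2/L, 0)$, $\beta \equiv 0$ satisfies BC2 and has $|(\nabla u)_{\mathrm{sym}}|^2 = \gamma^2/(2L^2)$, so integration over $\overline{\Omega}_L$ produces the first member $\gamma^2/(2L)$. For the second member I copy the five-region partition of Proposition \ref{thm2}: a slightly-shrunk $R_\epsilon$, two transition layers of width $\epsilon$ centred on the constant-$\xi$ diagonals bounding $R$, and two outer triangles attached respectively to the upper-left and lower-right Dirichlet segments. On $R_\epsilon$ I set $u = (\gamma x_2/L, 0)$ and $\beta \equiv 0$. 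Since both admissible matrices in (\ref{side}) have symmetric part $s\,\mathrm{diag}(1,-1)$ in $(x_1,x_2)$-coordinates, a short pointwise minimisation shows that no single-slip $\beta$ lowers $|(\nabla u-\beta)_{\mathrm{sym}}|^2$ below $\gamma^2/(2L^2)$, so $R_\epsilon$ contributes the target $\gamma^2(1-L)/(2L)-\mathcal{O}(\epsilon)$. On the two outer triangles I set $u = (\gamma,0)$ and $u = (0,0)$ respectively, with $\beta \equiv 0$; these carry zero elastic and plastic energy away from the two corners where each triangle meets the \emph{opposite} Dirichlet edge, corners which will be swallowed by the transition layers.

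The transition-layer step is the main obstacle. Across each diagonal \emph{both} $u_\xi$ and $u_\eta$ jump, whereas a single-slip $\beta$ annihilates only one off-diagonal entry of $\nabla u$ at a time. My plan is to split each transition layer into two sub-strips: in the first, slip system~1 accommodates the $u_\eta$-jump via an auxiliary field $u^\beta$ with a steep $\xi^\alpha$-profile, whose $\xi$-derivative defines $\beta_{\eta\xi}$, and the residual $v = u - u^\beta$ is then bounded by solving a reflected Dirichlet problem as in Proposition \ref{thm3}, with Lemma \ref{lemma1} driving $\|\nabla v\|_{L^2}$ to $0$ as $\alpha \to 0^+$; in the second sub-strip slip system~2 performs the analogous role for the $u_\xi$-jump. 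The two sub-strips are glued at a common intermediate $\xi$-level via a small corner triangle, again treated by the reflection/Laplace trick used in the proof of Proposition \ref{thm2}. The curl of the resulting $\beta$ concentrates on the layers and their boundaries, with total mass $\mathcal{O}(\gamma)$ times the $L$-dependent diagonal length, producing the $c_L\sigma\gamma$ summand. Verifying that in each sub-strip the auxiliary Dirichlet data for $v$ has $H^{1/2}$-norm tending to $0$ as $\alpha\to 0^+$, uniformly along $\eta$, and that the two sub-constructions can be matched without introducing any multi-slip points, is the most delicate part of the argument.
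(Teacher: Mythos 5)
Your lower bound and the first member of the upper bound are correct and coincide with the paper's argument: the jump $\gamma/\sqrt{2}$ of $u_\eta$ along lines of constant $\xi$ in $R$, together with Cauchy--Schwarz, gives $\|\p u_\eta/\p\eta\|^2_{L^2}\geq\gamma^2(1-L)/(4L)$, the mirror estimate for $u_\xi$ on the constant-$\eta$ strip supplies the other half, and the linear purely elastic deformation realises $\gamma^2/(2L)$.

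The construction for the second member of the upper bound, however, has a structural flaw. You keep only the two transition layers of Proposition \ref{thm2} (centred on the constant-$\xi$ diagonals), put the full elastic shear $u=(\gamma x_2/L,0)$ on $R_\epsilon$, and put the constants $(\gamma,0)$ and $(0,0)$ on the outer triangles. At height $x_2$ the two sides of the upper layer then disagree in $u_1$ by $\gamma(1-x_2/L)$, while $u_2=0$ on both sides; in $(\xi,\eta)$-components this forces \emph{both} $u_\xi$ and $u_\eta$ to change by an amount of order $\gamma$ across the width-$\epsilon$ strip, i.e.\ in the $\xi$-direction. The resulting $\p u_\eta/\p\xi\sim\gamma/\epsilon$ can indeed be absorbed by $\beta_{\eta\xi}$, but $\p u_\xi/\p\xi\sim\gamma/\epsilon$ is a diagonal strain component, and by (\ref{varside}) no admissible single-slip $\beta$ has a nonzero diagonal entry; it therefore enters the elastic energy in full, contributing of order $\gamma^2L/\epsilon\to\infty$. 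Your sub-strip idea does not cure this: the ``analogous role'' for the second slip system would require the steep variation of $u_\xi$ to occur in the $\eta$-direction (so that it appears as $\p u_\xi/\p\eta$, absorbable by $\beta_{\xi\eta}$), whereas the mismatch you must resolve is transverse to a constant-$\xi$ strip and hence is unavoidably a $\xi$-derivative. The paper's construction avoids this by giving $u_\xi$ its own pair of transition layers, centred on the constant-$\eta$ diagonals: $u_\eta$ is interpolated or constant relative to the constant-$\xi$ sandwich, and $u_\xi$ relative to the constant-$\eta$ sandwich, so that each component's steep gradient is an off-diagonal entry relaxable by one of the two slip systems. The price is that one must check the four layer intersections (measure $\mathcal{O}(\epsilon^2)$, strain $\mathcal{O}(1)$) and the central square, where $\beta=0$ but $\p u_\eta/\p\xi+\p u_\xi/\p\eta$ cancels by symmetry. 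You would need to restructure your construction along these lines before the $c_L\sigma\gamma$ term can be obtained.
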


\begin{proof}
The proof is very similar to that of Proposition \ref{thm2}. 

Thus, for the lower bound, take a test pair $(u,\beta)$, and integrate $\frac{\p u_{\eta}}{\p\eta}$ over the strip $R$ (see Figure \ref{fig2}), to get 
\be
\left\|\frac{\p u_{\eta}}{\p\eta}\right\|_{L^2}^2\geq\frac{\gamma^2(1-L)}{4L},
\ee
and integrate $\frac{\p u_{\xi}}{\p \xi}$ over an analogous strip pointing in the $\xi$-direction to get the same estimate for $\left\|\frac{\p u_{\xi}}{\p\xi}\right\|_{L^2}$. Hence, $\overline{E}_L(u,\beta)\geq\gamma^2(1-L)/2L$, as required.

For the upper bound, it is easy to see that the first member of the upper-bound set in (\ref{horiz_bd}) is attained by the minimising purely elastic deformation, while the second member can be approached by a construction similar to, but more complicated than, that used in the proof of Proposition \ref{thm2}, the difference being that here we need two transition layers for each of $u_{\xi}$ and $u_{\eta}$, and we have to take a little care on the regions where they intersect.

\begin{figure}
\centering
\resizebox{4.1in}{3in}{\includegraphics{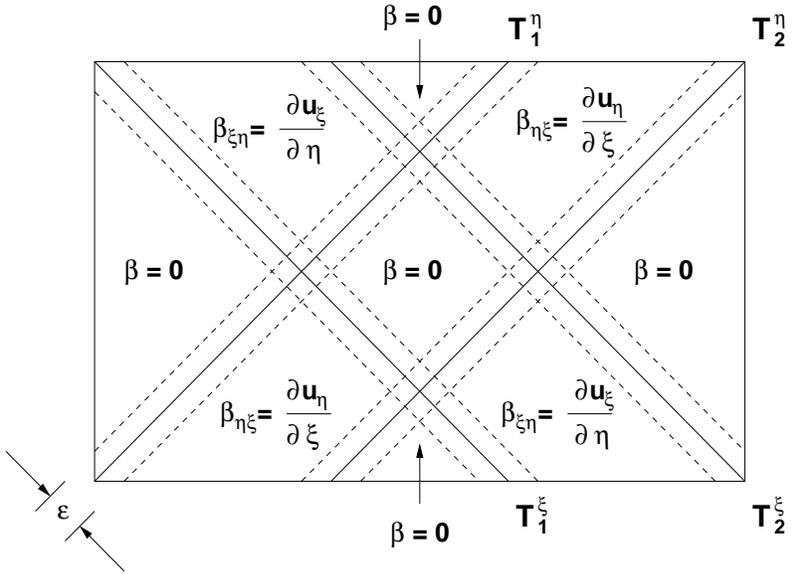}}
\caption{Transition layers used in the proof of Proposition \ref{thm5}.}\label{fig15}
\end{figure}

Thus, referring to Figure \ref{fig15}, we define, for $i=1,2$, the transition layers $T_i^{\eta}$ to be narrow strips of width $\epsilon$, centred on the corner-diagonals of constant $\xi$, and $T_i^{\xi}$ to be strips of the same width, but centred on the corner-diagonals of constant $\eta$. Between the $T_i^{\eta}$, $u_{\eta}$ is obtained by linear intepolation between the Dirichlet conditions, in the $\eta$ direction, while above and below the $T_i^{\eta}$-sandwich, $u_{\eta}$ is set equal to the appropriate Dirichlet condition. The other component of the displacement, $u_{\xi}$, is obtained on the complement of the $T_i^{\xi}$ in an analogous manner.

Next, $u_{\eta}$ is defined on $T_1^{\eta}$ as follows. Divide the part of $T_1^{\eta}$ below the intersection with $T_1^{\xi}$ into two parts, $A$ and $B'$, such that $A$ is the same as in Figure \ref{fig13} and $B'$ is a truncated version of $B$ (see Figure \ref{fig16}). Define $u_{\eta}$ on $A$ in exactly the same way as in Proposition \ref{thm2} with $\alpha>0$ small and fixed, and, for each fixed $\xi$, interpolate from the diagonal boundary of $A$ up to the unique linear profile on the upper constant-$\eta$ boundary of $B'$ which connects $u_{\eta}$ continuously across $T_1^{\eta}$. On the remainder of $T_1^{\eta}$, define $u_{\eta}$ by interpolating up to the Dirichlet condition at $x_2=L$, for each fixed $\xi$.

\begin{figure}
\centering
\resizebox{3in}{2.5in}{\includegraphics{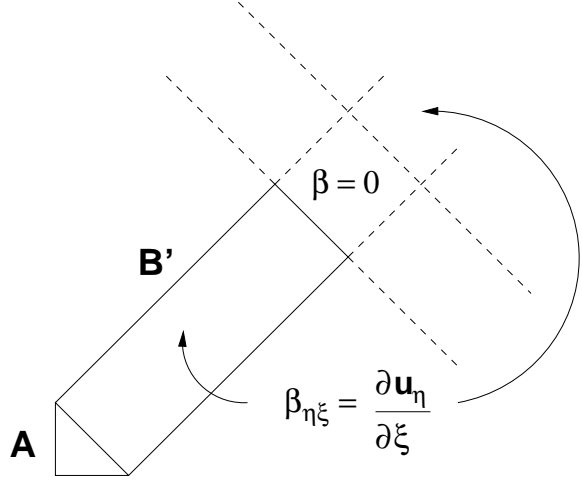}}
\caption{Subsets of $T_1^{\eta}$ used in the proof of Proposition \ref{thm5}.}\label{fig16}
\end{figure}

On $T_2^{\eta}$, $u_{\eta}$ is determined by a trivial symmetry transformation applied to $u_{\eta}$ on $T_1^{\eta}$, and likewise for $u_{\xi}$ on the $T_i^{\xi}$. For the plastic distortion, let $\beta=\nabla u^{\beta}$ on $A$, set $\beta=0$ on $T_1^{\eta}\cap T_i^{\xi}$, and on the rest of $T_1^{\eta}$ let $\beta_{\eta\xi}=\frac{\p u_{\eta}}{\p\xi}$. Define $\beta$ on the remaining transition layers by symmetry, and otherwise by appealing to Figure \ref{fig15}.

As a result of this, there is very little elastic energy on the transition layers, while the plastic energy is concentrated there (particularly on the boundaries). More precisely, we have $\|\frac{\p u_{\eta}}{\p\eta}\|_{L^2}^2=\mathcal{O}(\epsilon)$ on $T_i^{\eta}$ and $\|\frac{\p u_{\xi}}{\p\xi}\|_{L^2}^2=\mathcal{O}(\epsilon)$ on $T_i^{\xi}$, provided $\alpha(\epsilon)$ was chosen small enough, while, by the symmetry of the construction, $\frac{\p u_{\eta}}{\p\xi} + \frac{\p u_{\xi}}{\p\eta}=\mathcal{O}(1)$, pointwise, on the transition-layer intersections. Also note that, by symmetry, $\frac{\p u_{\eta}}{\p\xi} + \frac{\p u_{\xi}}{\p\eta}=0$ on the central square, where $\beta=0$, and thus there is no off-diagonal elastic energy there. 

Finally, it is easy to see that all the contributions to the plastic energy are $\mathcal{O}(\sigma\gamma)$, and hence, for our test functions,
\be
\overline{E}_L(u,\beta) = \frac{\gamma^2}{2L}(1-L) + \mathcal{O}(\sigma\gamma) + \mathcal{O}(\epsilon),
\ee
so that, letting $\epsilon\ra 0$, the desired upper bound follows. 
\end{proof}

We have to work a little harder to prove strict positivity of the energy in the intermediate case $1<L<2$.

\begin{proposition}
Subject to \textbf{BC2}, $1<L<2\Rightarrow\exists~c_L>0$ such that
\be
\frac{c_L\sigma\gamma^2}{\sigma + \sqrt{\sigma^2 + 2c_L\gamma^2}}\leq J_L\leq\textrm{min}\left\{\frac{\gamma^2}{2L},2\sqrt{2}\gamma\sigma\right\}. \label{eq:thm6}
\ee\label{thm6} 
\end{proposition}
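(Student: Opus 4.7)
The upper bound has two components. The first, $\gamma^2/(2L)$, is attained by the purely elastic linear deformation $u=(\gamma x_2/L, 0)$, $\beta=0$, exactly as in Propositions \ref{thm2} and \ref{thm5}. The second, $2\sqrt{2}\sigma\gamma$, is obtained by an adaptation of the two-shear-band construction from Proposition \ref{thm4}: since $L<2$, the two alternating shear bands that carry the horizontal shear can no longer be disjoint and simultaneously miss the Dirichlet boundaries, so they must meet along a common interior segment of length $\mathcal{O}(1)$. The curl of $\beta$ is concentrated on this segment with jump $\mathcal{O}(\gamma)$, giving a total curl mass $2\sqrt{2}\gamma$ after one accounts for the $45^{\circ}$ rotation relating the slip frame to the $x_i$-frame. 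Small interpolation layers near the Dirichlet boundaries, of the kind constructed in Propositions \ref{thm2} and \ref{thm5}, allow the elastic residue to be made arbitrarily small, leaving a net plastic cost of $2\sqrt{2}\sigma\gamma$.

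For the lower bound, the strip argument of Proposition \ref{thm5} fails outright, because when $L>1$ there is no strip of the form $R$ lying entirely in $\overline{\Omega}_L$ and bounded away from the lateral sides. Instead, I would aim to prove a trace-type inequality of the form
\be
\gamma \leq C_L\bigl(\|(\nabla u-\beta)_{\mathrm{sym}}\|_{L^2(\overline{\Omega}_L)} + \|\nabla\times\beta\|_{\mathcal{M}(\overline{\Omega}_L)}\bigr),\label{eq:trace_prop6}
\ee
and then deduce the stated bound by a purely algebraic manipulation. Indeed, (\ref{eq:trace_prop6}) together with the trivial bounds $\|(\nabla u-\beta)_{\mathrm{sym}}\|_{L^2}^2\leq \overline{E}_L$ and $\|\nabla\times\beta\|_{\mathcal{M}}\leq \overline{E}_L/\sigma$ gives $\gamma \leq C_L(\sqrt{\overline{E}_L}+\overline{E}_L/\sigma)$; squaring via $(a+b)^2\leq 2(a^2+b^2)$ and setting $c_L=1/(2C_L^2)$ yields $c_L\gamma^2\leq \overline{E}_L + \overline{E}_L^2/\sigma^2$, which is equivalent, after solving the quadratic in $\overline{E}_L$, to the stated lower bound $\overline{E}_L\geq c_L\sigma\gamma^2/(\sigma+\sqrt{\sigma^2+2c_L\gamma^2})$.

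To establish (\ref{eq:trace_prop6}), I would integrate $\partial u_1/\partial x_2$ along vertical lines from $x_2=0$ to $x_2=L$: by \textbf{BC2} this yields $\gamma$ for a.e.\ $x_1\in(0,1)$. Decomposing $\nabla u=(\nabla u-\beta)+\beta$, the elastic part is controlled by Cauchy--Schwarz, contributing the $\|(\nabla u-\beta)_{\mathrm{sym}}\|_{L^2}$-term (after noting that, by (\ref{side}), $\beta$ has zero trace, so that diagonal and antisymmetric parts of $\nabla u$ are entirely of elastic character). The $\beta$-contribution is handled via path comparison and Stokes' theorem: taking a second vertical path and computing the loop integral of $\beta$ shows that the difference equals the integral of $\nabla\times\beta$ over the enclosed region, which in turn is controlled by $\|\nabla\times\beta\|_{\mathcal{M}}$. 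Averaging over $x_1$ (and also over the complementary coordinate, to exploit both admissible slip directions in (\ref{side})) then gives (\ref{eq:trace_prop6}) with an explicit, finite constant $C_L$ depending only on $L$.

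\textbf{Main obstacle.} The delicate point is obtaining (\ref{eq:trace_prop6}) with a finite $C_L$ throughout the range $1<L<2$, and with enough control over its $L$-dependence that $c_L$ stays bounded below away from $L=1$, as required for consistency with the continuity at $L=1$ expected from Propositions \ref{left-cont} and \ref{thm5}. The non-convexity of the single-slip side condition prevents a direct Korn-type estimate; one must instead geometrically exploit the fact that, for $L<2$, the two admissible rank-one slip patterns cannot simultaneously connect the top and bottom Dirichlet boundaries without either spilling out of the domain or acquiring a jump (hence curl). Isolating this geometric incompatibility quantitatively is the heart of the proof.
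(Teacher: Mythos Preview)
Your upper-bound plan is essentially right and matches the paper, with one simplification: since $L>1$, the two shear bands (of width $(L-1)/\sqrt{2}$) can be placed to miss the Dirichlet boundaries entirely, so no interpolation layers are needed and the elastic energy vanishes exactly. The bands overlap in a rhombus $S_1\cap S_2$ on which one sets $\beta=0$; the curl of $\beta$ is supported on $\partial(S_1\cap S_2)$, and a direct computation gives total curl mass $2\sqrt{2}\gamma$.

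Your lower-bound argument has a genuine gap. The assertion that ``diagonal and antisymmetric parts of $\nabla u$ are entirely of elastic character'' is false in the $x_i$-frame: the side condition (\ref{side}) gives $\beta$ zero \emph{trace}, but both its individual diagonal entries and its skew part are generically nonzero. Thus, writing $\partial_2 u_1=(\nabla u-\beta)_{12}+\beta_{12}$, the first term is an off-diagonal entry of $\nabla u-\beta$ and is \emph{not} controlled by the symmetric part alone; and your Stokes argument for the second term controls only differences $\int_0^L\beta_{12}(x_1',\cdot)-\int_0^L\beta_{12}(x_1,\cdot)$, modulo uncontrolled boundary contributions $\int\beta_{11}(\cdot,0)$ and $\int\beta_{11}(\cdot,L)$. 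So the claimed trace inequality does not follow from the steps you outline.

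The paper's route is substantially different. It invokes the generalised Korn inequality of Garroni--Leoni--Ponsiglione, which bounds the \emph{full} tensor $\nabla u-\beta$ in $L^2$, up to subtraction of a constant skew matrix $K$, by $\|(\nabla u-\beta)_{\mathrm{sym}}\|_{L^2}+\|\nabla\times\beta\|_{L^1}$. It then works in the rotated frame and uses a measure-theoretic lemma (after Conti--Ortiz) exploiting single slip: on a.e.\ line in one of the two slip directions, the corresponding off-diagonal component of $\beta$ must vanish somewhere, so that component is controlled in $L^1$ by its directional derivative, hence by the curl. Combining these two inputs, the paper integrates $\partial_\eta u_\xi$ over suitable strips reaching both Dirichlet boundaries to obtain incompatible upper and lower bounds on the constant $K_{\xi\eta}$ unless $\overline{E}_L$ is large; this yields exactly the quadratic inequality you wrote down. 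The generalised Korn step is precisely what supplies the missing control on the antisymmetric part of the elastic strain.
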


\begin{proof}
For the lower bound, we combine a recent generalisation of Korn's inequality from \cite{gar_pons} which is nicely adapted to our energy functional with a Lemma from \cite{conti_05} which allows us to control (one component of) $\beta$ via the curl, provided the single-slip condition is satisfied.

First, let $1<L<2$, and, for test functions $(u,\beta)$ satisfying \textbf{BC2}, we define the average-symmetric tensor $A$ via
\be
A(u,\beta) = (\nabla u-\beta) - \frac{1}{\left|\overline{\Omega}_L\right|}\int_{\overline{\Omega}_L}(\nabla u - \beta)_{\textrm{skew}}~dx.
\ee

Then, by Theorem 11 of \cite{gar_pons}, there exists a constant $C(\overline{\Omega}_L)$ such that \footnote{Note that, at the time of writing, (\ref{gar_pons}) is only known to hold in 2-d, although analogous inequalities in higher dimensions have been conjectured by Neff {\em et al.}\cite{neff}.}
\be
\int_{\overline{\Omega}_L}\left|A_{\textrm{skew}}\right|^2~dx \leq C\left(\int_{\overline{\Omega}_L}\left|A_{\textrm{sym}}\right|^2~dx +\left\|    
\nabla\times A\right\|^2_{L^1(\overline{\Omega}_L)}\right),\label{gar_pons}
\ee
and hence there exists a constant, skew-symmetric matrix $K(u,\beta)$ such that
\be
\int_{\overline{\Omega}_L}\left|(\nabla u-\beta) - K(u,\beta)\right|^2 \leq C \overline{E}_{L}(u,\beta)\left(1 + \frac{\overline{E}_L(u,\beta)}{\sigma^2}\right).\label{K_est}
\ee

Now, consider the square $Q\subset\overline{\Omega}_L$, of side $\frac{1}{\sqrt{2}}$, which is bounded by the lines $\{x_1+x_2=L/2\}$, $\{x_2-x_1=-1+L/2\}$, $\{x_1+x_2=1+L/2\}$ and $\{x_2-x_1=L/2\}$ (See Figure \ref{fig7}): for convenience, we have put the origin of the $(\xi,\eta)$-co-ordinates at the bottom corner of $Q$.

\begin{figure}
\centering
\resizebox{2.5in}{3in}{\includegraphics{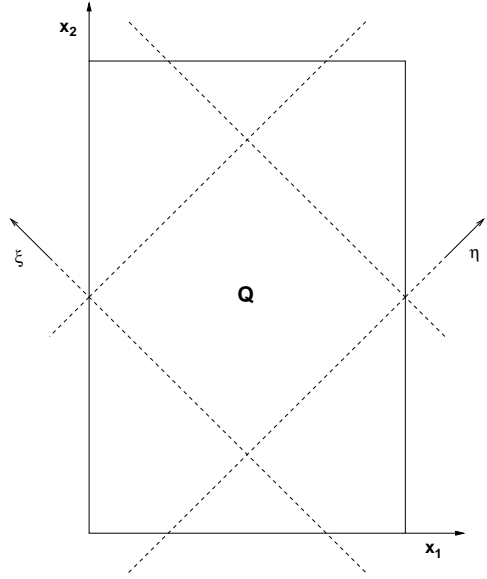}}
\caption{The region $Q$ used in the proof of Proposition \ref{thm6}.}\label{fig7}
\end{figure}

By analogy with the proof of Lemma 4.3 of \cite{conti_05}, we define the following subsets of $Q$:
\bq
\omega^{(\xi)} & = & \left\{\xi\in (0,1): \beta_{\eta\xi}(\xi,\eta)\neq 0~\textrm{for a.e.}~\eta\in(0,1)\right\}, \\
\omega^{(\eta)} & = & \left\{\eta\in (0,1): \beta_{\xi\eta}(\xi,\eta)\neq 0~\textrm{for a.e.}~\xi\in(0,1)\right\}, 
\eq
and, from the side condition (\ref{side}), we conclude that at least one of $\omega^{(\xi)}$ and $\omega^{(\eta)}$ must be a null set. For the remainder of the proof, we may assume w.l.o.g. that $\omega^{(\eta)}$ is a null set.

Let 
\be
P=\overline{\Omega}_L\cap\left\{\frac{L}{2}\leq x_1 + x_2\leq 1 + \frac{L}{2}\right\},
\ee
so that $P$ is, so to speak, the extension of $Q$ in the $\xi$-direction.

Then, since $\omega^{(\eta)}$ is null, we obtain
\be
\left\|\beta_{\xi\eta}\right\|_{L^1(P)}\leq C\left\|\p_{\xi}\beta_{\xi\eta}\right\|_{L^1(P)}\leq\frac{C}{\sigma} \overline{E}_L(u,\beta),\label{beta_est}
\ee
and it follows from (\ref{varside}), (\ref{K_est}), (\ref{beta_est}) and $L^2\hookrightarrow L^1$ embedding that,
\be
\left\|\frac{\p u_{\xi}}{\p\xi}\right\|_{L^1(P)}^2\leq C \overline{E}_L \label{uxi_1}
\ee
and
\bq
\left\|\frac{\p u_{\xi}}{\p\eta} - K_{\xi\eta}\right\|_{L^1(P)}^2 & \leq & C \overline{E}_L\left(1 + \frac{\overline{E}_L}{\sigma^2}\right) + 2\left\|\beta_{\xi\eta}\right\|_{L^1(P)}^2\\
 & \leq & C \overline{E}_L\left(1 + \frac{\overline{E}_L}{\sigma^2}\right).\label{uxi_2}
\eq

Define strips $T_1, T_2\subset P$ by

\be
T_1 = P\cap\{x_1+x_2\leq 1\}, \qquad T_2 = P \cap\{x_1+x_2\geq L\},
\ee
such that $T_1$ intersects the bottom boundary, and $T_2$ the top boundary, in a set of length $1-L/2$.

The strategy now is to show that 
\bq
\|u_{\xi}\|_{L^1(T_1)} & \leq & C\overline{E}_L^{\frac{1}{2}},\label{T1}\\
\|u_{\xi} + \gamma/\sqrt{2}\|_{L^1(T_2)} & \leq & C\overline{E}_L^{\frac{1}{2}},\label{T2}\\
K_{\xi\eta}(u,\beta) & = & \mathcal{O}(\overline{E}_L^{\frac{1}{2}}).\label{KUp}
\eq
Then, integrating $\frac{\p u_{\xi}}{\p\eta}$ between $T_1$ and $T_2$ will give a lower bound on $K_{\xi\eta}$, and a contradiction if $\overline{E}_L$ is too small relative to the average shear $\gamma$. 

We begin by integrating $\frac{\p u_{\xi}}{\p\xi}$ from $(\xi_0(\eta),\eta)$ on the $x_2=0$ boundary of $T_1$ to the point $(\xi,\eta)$ in the interior (see Figure \ref{fig10}), to get
\be
u_{\xi}(\xi,\eta) = \int_{\xi_0}^{\xi}\frac{\p u_{\xi}}{\p\xi}(\xi',\eta)~d\xi',
\ee
which, upon integration over $(\xi,\eta)\in T_1$, gives 
\bq
\iint_{T_1}|u_{\xi}|~d\xi d\eta & \leq & \iint_{T_1}\int_{\xi_0(\eta)}^{\xi}\left|\frac{\p u_{\xi}}{\p\xi}\right|(\xi',\eta)~d\xi'd\xi d\eta\\
& \leq & C\left\|\frac{\p u_{\xi}}{\p\xi}\right\|_{L_1(\overline{\Omega}_L)}\\
& \leq & C\overline{E}_L^{\frac{1}{2}},
\eq
and therefore (\ref{T1}) holds. The estimate (\ref{T2}) follows by an analogous integration on $T_2$.

\begin{figure}
\centering
\resizebox{2in}{2.2in}{\includegraphics{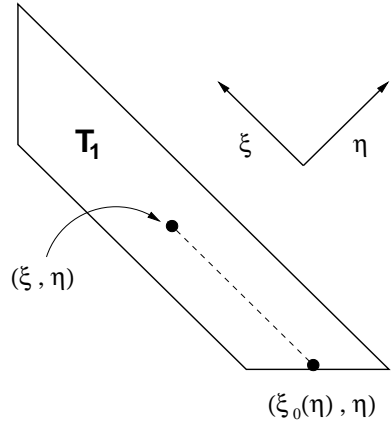}}
\caption{Construction used in the proof of Eq.(\ref{T1}).}\label{fig10}
\end{figure}

For the upper bound on $K_{\xi\eta}$, consider the small triangle $R_1 = T_1\cap\left\{x_1-x_2\geq\frac{L}{2} \right\}$ next to the $x_2=0$ boundary (Figure \ref{fig11}b).

\begin{figure}
\centering
\resizebox{4.65in}{2in}{\includegraphics{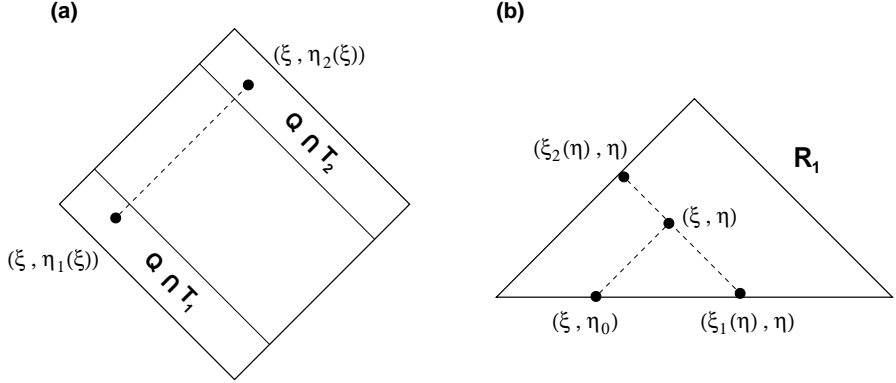}}
\caption{Constructions for the lower \textbf{(a)} and upper \textbf{(b)} bounds on $K_{\xi\eta}$ in Proposition \ref{thm6}.}\label{fig11}
\end{figure}

Integrate $\frac{\p u_{\xi}}{\p\eta}$ from $x_2=0$ to the point $(\xi,\eta)$, holding $\xi$ fixed:
\be
u_{\xi}(\xi,\eta) = \int_{\eta_0}^{\eta}\frac{\p u_{\xi}}{\p\eta}(\xi,\eta')~d\eta',\label{uxi_3}
\ee
then integrate (\ref{uxi_3}) between $\xi_1(\eta)$ and $\xi_2(\eta)$, and sum over all line segments $(\xi_1,\xi_2)$ lying in $R_1$, to get
\be
\iint_{R_1}u_{\xi}(\xi,\eta)~d\xi d\eta = \int_{\eta_1}^{\eta_2}\int_{\xi_1(\eta)}^{\xi_2(\eta)}\int_{\eta_0(\xi)}^{\eta}\frac{\p u_{\xi}}{\p\eta}(\xi,\eta')~d\eta' d\xi d\eta,\label{uxi_4}
\ee
where $[\eta_1,\eta_2]=\Pi_{\eta}(R_1)$, and $\Pi_{\eta}$ denotes projection onto the $\eta$-axis.

Adding and subtracting $K_{\xi\eta}$ to the integrand on the rhs of (\ref{uxi_4}), and using (\ref{uxi_2}) and (\ref{T1}), we arrive at
\be
|K_{\xi\eta}|\leq C\overline{E}_L^{\frac{1}{2}}\left(1 + \left(1+\frac{2\overline{E}_L}{\sigma^2}\right)^{\frac{1}{2}}\right).\label{uxi_8}
\ee

For the lower bound on $K_{\xi\eta}$, we work on the square $Q$, and, in particular, the subsets $Q\cap T_i$ thereof (see Figure \ref{fig11}b).

Firstly, we integrate $\frac{\p u_{\xi}}{\p\eta}$ between the two points $(\xi,\eta_1(\xi))\in Q\cap T_1$ and $(\xi,\eta_2(\xi))\in Q\cap T_2$:
\be
\int_{\eta_1(\xi)}^{\eta_2(\xi)}\frac{\p u_{\xi}}{\p\eta}(\xi,\eta')~d\eta' = u_{\xi}(\xi,\eta_2) - u_{\xi}(\xi,\eta_1).\label{uxi_5}
\ee 
Then, we integrate (\ref{uxi_5}) over $\xi\in\Pi_{\xi}(Q)$, $\eta_1\in\Pi_{\eta}(T_1)$ and $\eta_2\in\Pi_{\eta}(T_2)$, to arrive at
\be
\iiiint\frac{\p u_{\xi}}{\p\eta}(\xi,\eta')~d\eta' d\xi d\eta_1 d\eta_2 = \frac{1}{\sqrt{2}}\left( 1-\frac{L}{2} \right)\left(\iint_{T_2}u_{\xi}~d\xi d\eta_2 - \iint_{T_1}u_{\xi}~d\xi d\eta_1\right).\label{uxi_6}
\ee

Thus, adding and subtracting $K_{\xi\eta}$ from the integrand on the lhs of (\ref{uxi_6}), and using (\ref{uxi_2}), (\ref{T1}) and (\ref{T2}), gives us
\be
|K_{\xi\eta}|\geq C\left(\frac{\gamma}{\sqrt{2}} - \overline{E}_L^{\frac{1}{2}}\left(1 + \left(1+\frac{2\overline{E}_L}{\sigma^2}\right)^{\frac{1}{2}}\right)\right),\label{uxi_7}
\ee
which, together with (\ref{uxi_8}), implies
\be
\overline{E}_L^{\frac{1}{2}}\left(1 + \left(1+\frac{2\overline{E}_L}{\sigma^2}\right)^{\frac{1}{2}}\right)\geq C\gamma.
\ee
Squaring, and using $(a+b)^2\leq 2(a^2 + b^2)$ plus the quadratic formula, leads to the required lower bound on $J_L$.

For the upper bound on $J_L$, we note that the first member of the upper-bound set in (\ref{eq:thm6}) is attained by the minimising purely elastic deformation satisfying \textbf{BC2}, while the second member is attained by a simple crossing-shear-band construction, as follows.

Referring to Figure \ref{fig12}, inscribe, in $\overline{\Omega}_L$, crossing shear bands $S_1$ and $S_2$ which just miss the Dirichlet boundaries, and which have width $\frac{1}{\sqrt{2}}(L-1)$. Then define $u_{\xi}$ and $u_{\eta}$ by linearly interpolating between the boundary conditions across the respective $S_i$. Finally, set $\beta=\nabla u$ on $\overline{\Omega}_L\setminus (S_1\cap S_2)$ and $\beta=0$ on $S_1\cap S_2$. An easy calculation shows that the elastic energy vanishes, while the plastic energy is given by $2\sqrt{2}\gamma\sigma$, as required.
\end{proof}

\begin{figure}
\centering
\resizebox{3.4in}{1.6in}{\includegraphics{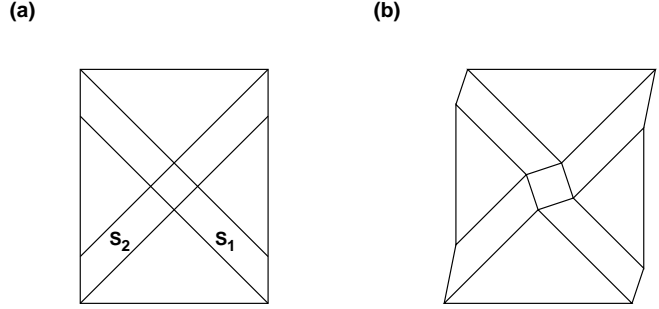}}
\caption{Upper-bound construction for Proposition \ref{thm6} \textbf{(a)} in the reference configuration, and \textbf{(b)} the deformed configuration.}\label{fig12}
\end{figure}

\begin{remark}
If the single-slip condition is dropped then $J_L=0$ for $L>1$, by essentially the same construction as in Figure \ref{fig12}, the difference being that we may take $\beta=\nabla u$ on the whole of $\overline{\Omega}_L$, and not just away from the shear-band intersection. 
\end{remark}

Finally, we show that, as in the diagonal-shear case, the transition from zero to positive energy is continuous.

\begin{proposition}
Subject to \textbf{BC2}, $L=2\Rightarrow J_L=0$.\label{thm7}
\end{proposition}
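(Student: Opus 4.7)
The approach will parallel the proof of Proposition~\ref{thm3}: for every $\epsilon > 0$, construct a pair $(u, \beta)$ on $\overline{\Omega}_2$ satisfying \textbf{BC2} and the single-slip side condition with $\overline{E}_2(u, \beta) < \epsilon$, so that $J_2 = 0$. The starting point is the two-shear-band construction of Proposition~\ref{thm4}, which at $L = 2$ degenerates: the bands $S_1, S_2$ collapse onto the diagonal segments $D_1 = \{x_1 + x_2 = 2\}$ and $D_2 = \{x_2 = x_1\}$, dividing $\overline{\Omega}_2$ into three plateau triangles $R_1, R_2, R_3$ on which the would-be (now ill-defined) $u$-field takes the constant values $(0,0)$, $\gamma(\tfrac{1}{2},\tfrac{1}{2})$ and $\gamma(1, 0)$, respectively. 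The two diagonals meet at the corner $(1,1)$, which lies on the free lateral boundary $x_1 = 1$.

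In rotated coordinates $(\xi, \eta)$ aligned so that $D_2$ is a level set of $\xi$ and $D_1$ is a level set of $\eta$, introduce two thin strips $N_2, N_1$ of width $\epsilon$ around $D_2, D_1$ respectively, truncated so that $N_1 \cap N_2 = \emptyset$. On $N_2$ put $u^{\beta_2} = g^{(2)}_\alpha(\xi)\,\hat{\eta}$ with $g^{(2)}_\alpha$ a rescaled sigmoid of the Lemma~\ref{lemma1} type realising the jump $\gamma/\sqrt{2}$ in $u_\eta$ across $D_2$, and set $\beta^{(2)} := \nabla u^{\beta_2}$ on $N_2$, zero elsewhere; then $\beta^{(2)}$ lies pointwise in the first slip-matrix subspace of (\ref{varside}). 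Construct $u^{\beta_1}, \beta^{(1)}$ analogously on $N_1$ using a sigmoid in $\eta$ to effect the jump $-\gamma/\sqrt{2}$ in $u_\xi$; $\beta^{(1)}$ lies in the second slip-matrix subspace. Since the supports are disjoint, the pointwise single-slip condition holds, and since each $\beta^{(i)}$ is a gradient, the curl term vanishes identically.

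Setting $v := u - u^{\beta_1} - u^{\beta_2}$ reduces the elastic energy to $\|(\nabla v)_{\mathrm{sym}}\|^2_{L^2(\overline{\Omega}_2)}$. The residual Dirichlet data for $v$ on $x_2 = 0$ and $x_2 = 2$ is the \textbf{BC2} data minus the traces of the two sigmoids; by construction it has, up to rescaling and translation, the reflected-sigmoid form $\bar{f}_\alpha$ of Lemma~\ref{lemma1}. Doubling $\overline{\Omega}_2$ across its free lateral sides into the cylinder $\mathbb{S}^1 \times (0, 2)$, exactly as in the proof of Proposition~\ref{thm3}, converts this into a pure Dirichlet problem, and Proposition~1.7 of \cite{taylor} together with Lemma~\ref{lemma1} yields $\|\nabla v\|_{L^2} \to 0$ as $\alpha \to 0^+$; the elastic energy therefore tends to zero.

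The principal technical obstacle is the behaviour near the corner $(1, 1)$, the common vertex of all three plateau triangles. There the two strips $N_1, N_2$ must be truncated to avoid overlap, leaving a small buffer quadrilateral $B$ of linear size $\epsilon$ on which $\beta = 0$ but where the three surrounding plateau values must match up via $v$ alone; a naive interpolation of $v$ inside $B$ would incur an $O(\gamma^2)$ elastic energy independent of $\epsilon$, spoiling the argument. I would dispose of this by further smoothing the trace of $v$ on $\partial B$ using an additional pair of Lemma~\ref{lemma1}-type sigmoids adapted to the corner, exploiting the fact that $(1,1)$ lies on the free boundary $x_1 = 1$ so that only a half-reflection (rather than a full doubling) is needed. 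Choosing $\alpha$ to decrease sufficiently fast as $\epsilon \to 0$, the extra corner contribution is also $o(1)$, yielding $J_2 = 0$.
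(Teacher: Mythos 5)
Your overall strategy---realise the two degenerate shear bands by curl-free gradient fields $\beta^{(i)}=\nabla u^{\beta_i}$ with disjoint supports, built from the $x^{\alpha}$-sigmoids of Lemma~\ref{lemma1}, and then remove the residual Dirichlet mismatch by the $H^{\frac{1}{2}}$/harmonic-extension argument of Proposition~\ref{thm3}---is the same as the paper's. The difference, and the gap, lies in how the two transitions are housed. You centre thin strips $N_1,N_2$ on the diagonals, which forces a truncation near the common vertex $(1,1)$ and creates the buffer region $B$ that you rightly identify as the principal obstacle; but your proposed repair does not close it. Truncating the support of $\beta^{(1)}$ at $\partial B$ makes $\beta_{\xi\eta}$ jump from $(g^{(1)}_{\alpha})'$ to $0$ across a segment of (essentially) constant $\xi$ in the interior of $\overline{\Omega}_2$, and by (\ref{curl_def}) this contributes roughly $\int\big|[\beta_{\xi\eta}]\big|\,d\eta \approx \gamma/\sqrt{2}$ to the curl---an $\mathcal{O}(\sigma\gamma)$ cost independent of $\alpha$ and $\epsilon$. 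Smoothing the trace of $v$ on $\partial B$ with further sigmoids controls only the elastic energy inside $B$, not this curl term; tapering the amplitude of $g^{(1)}_{\alpha}$ near the corner instead would introduce a $\beta_{\xi\xi}$ component and violate (\ref{varside}). As written, your construction therefore only yields $J_2\leq C\sigma\gamma$, i.e.\ the $1<L<2$ bound, not $J_2=0$.

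The obstacle is avoidable, and the paper's proof avoids it by design: each transition is placed entirely on \emph{one} side of its diagonal and is allowed to occupy the whole corresponding triangle. With $(\xi,\eta)$ centred at $(1,1)$, one takes $u^{\beta}_{\xi}=-\frac{\gamma}{\sqrt{2}}(\sqrt{2}\eta)^{\alpha}$ on the top triangle $\{\xi>0,\eta>0\}$, $u^{\beta}_{\eta}=\frac{\gamma}{\sqrt{2}}\left(1-(-\sqrt{2}\xi)^{\alpha}\right)$ on the bottom triangle $\{\xi<0,\eta<0\}$, constants on the middle triangle, and $\beta=\nabla u^{\beta}$ wherever it is nonzero. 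The supports of the two slip activities are then the two open triangles, which meet only at the single point $(1,1)$ on the free lateral boundary: there is no overlap, no truncation and no buffer, and every constant-$\eta$ (resp.\ constant-$\xi$) chord of the top (resp.\ bottom) triangle runs from boundary to boundary, so $\beta$ is genuinely curl-free in the sense of (\ref{curl_def}) while satisfying the single-slip condition pointwise. The only residue is then the $f_{\alpha}$-shaped mismatch on $x_2=0$ and $x_2=2$, which your cylinder-doubling argument handles exactly as in Proposition~\ref{thm3}. If you rewrite your construction with one-sided, full-triangle transitions (your width parameter $\epsilon$ buys nothing, since the $H^{\frac{1}{2}}$ seminorm is scale invariant), the proof goes through.
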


\begin{proof}
The proof is similar to that of Proposition \ref{thm3}.

Take $L=2$, and let the $(\xi,\eta)$-co-ordinates be centred at the mid-point of the right-hand lateral boundary, as shown in Figure \ref{fig8}.

\begin{figure}
\centering
\resizebox{2.1in}{3in}{\includegraphics{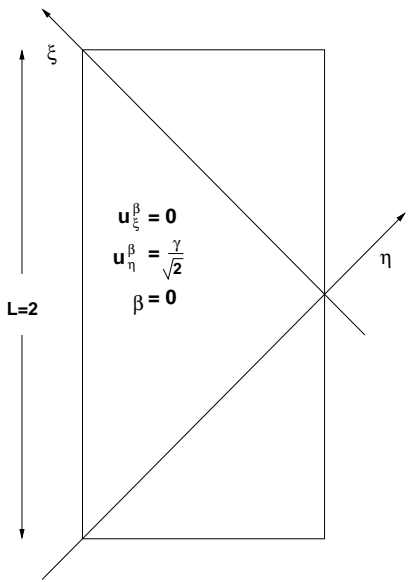}}
\caption{Orientation of $(\xi,\eta)$-co-ordinates used in the proof of Proposition \ref{thm7}.}\label{fig8}
\end{figure}

We construct test functions $(u^{\beta},\beta)$ as follows. On $\{\xi>0,\eta<0\}$ let $(u_{\xi}^{\beta},u_{\eta}^{\beta})=\gamma(0,1/\sqrt{2})$ and $\beta=0$, on $\{\xi>0,\eta>0\}$ let
\be
u_{\xi}^{\beta} = -\frac{\gamma}{\sqrt{2}}\left(\sqrt{2}\eta\right)^{\alpha},\quad u_{\eta}^{\beta}=\frac{\gamma}{\sqrt{2}},\quad \beta=\nabla u^{\beta},
\ee
and on $\{\xi<0,\eta<0\}$ let
\be
u_{\xi}^{\beta} = 0,\quad u_{\eta}^{\beta}=\frac{\gamma}{\sqrt{2}}\left(1-(-\sqrt{2}\xi)^{\alpha}\right),\quad \beta=\nabla u^{\beta},
\ee
for $\alpha\in[0,1]$.

Clearly, $\nabla\times\beta=0$, and, writing $v=u-u^{\beta}$ as usual, we wish to make $\overline{E}_L=\|(\nabla v)_{\textrm{sym}}\|_{L^2}^2$ arbitrarily small by letting $\alpha\ra 0$, subject to the boundary conditions
\be
(v_{\xi},v_{\eta})=\left\{
\begin{array}{rcl}
\left(\frac{\gamma}{\sqrt{2}}\left((\sqrt{2}\eta)^{\alpha}-1\right),0\right) & : & x_2=2,\\
\left(0,\frac{\gamma}{\sqrt{2}}\left(-1 + (-\sqrt{2}\xi)^{\alpha}\right)\right) & : & x_2=0
\end{array}
\right.. 
\ee
Since we have
\be
\left\|(\nabla v)_{\textrm{sym}}\right\|_{L^2}^2\leq C\left(\|\nabla v_{\xi} \|_{L^2}^2 + \|\nabla v_{\eta}\|_{L^2}^2\right),
\ee
it is enough to show that $\|\textrm{Tr}(v)\|_{H^{\frac{1}{2}}(\p\overline{\Omega}_2)}\ra 0$ as $\alpha\ra 0$. The proof of this follows just as in the diagonal-shear case.
\end{proof}

\section{Extension to 3-d: B2 crystals}  \label{sec:3d}

A sceptical reader might be tempted to dimiss our 2-d model as somewhat artificial, but in fact the results of section~\ref{sec:2d} carry over almost immediately to the case of crystals with $B2$ symmetry. 

As described in the introduction, two of the slip-plane normals in the B2 crystal are aligned with the $(\xi,\eta)$-axes, as before, and the third is aligned with the $\zeta$ (resp. $x_3$)-axis, which is taken to be pointing out of the page.

In co-ordinates $x_i$ aligned with the shear-sample faces, the domain occupied by the crystal is taken to be $\Omega_L=(0,1)\times(0,L)\times(0,1)$, and we impose a horizontal shear of magnitude $\gamma$, such that the boundary conditions take the form 
\be
u=\gamma(1,0,0)\quad\textrm{at}\quad x_2=L,\quad u=(0,0,0)\quad\textrm{at}\quad x_2=0.\label{bc3}
\ee

For the purposes of deriving the required lower energy bound, the single-slip side condition on the plastic distortion $\beta\colon\Omega_L\rightarrow\mathbb{R}^{3\times 3}$ is now taken to be, in the $(\xi,\eta,\zeta)$-frame,
\be
\beta(\xi,\eta,\zeta)\in\left\{
\begin{pmatrix}
0\\
s_2\\
s_3
\end{pmatrix}
\medotimes
\begin{pmatrix}
1\\
0\\
0
\end{pmatrix}
,
\begin{pmatrix}
s_1\\
0\\
s_3
\end{pmatrix}
\medotimes
\begin{pmatrix}
0\\
1\\
0
\end{pmatrix}
,
\begin{pmatrix}
s_1\\
s_2\\
0
\end{pmatrix}
\medotimes
\begin{pmatrix}
0\\
0\\
1
\end{pmatrix}
\right\},\label{3dslip}
\ee
for a.e. $(\xi,\eta,\zeta)\in\Omega_L$, and there is no restriction on the $s_i(\xi,\eta,\zeta)$. Note that this is a somewhat weaker condition than the one in~\eqref{eq:side_cond_unrelaxed}, in the sense that we allow any Burgers vector which lies within a given slip plane, and not just those in the direction of the cubic-lattice-face diagonals. Of course, any lower energy bound for the relaxed condition still holds for the original one.

In this setting, for a.e. $(\xi,\eta,\zeta)$, we have
\be
\beta = \begin{pmatrix}
0 & \beta_{\xi\eta} & \beta_{\xi\zeta}\\
\beta_{\eta\xi} & 0 & \beta_{\eta\zeta}\\
\beta_{\zeta\xi} & \beta_{\zeta\eta} & 0
\end{pmatrix},
\ee
and for a.e. slice of constant $\zeta$ (call it $S_{\zeta}$), the 2-d side condition holds: $\beta_{\xi\eta}=0$ or $\beta_{\eta\xi}=0$ for a.e. $(\xi,\eta)\in S_{\zeta}$.

Now, consider the energy functional~\eqref{eq:main_energy}, with $\tau$ set to zero and subject to the relaxed side condition~\eqref{3dslip}. Denote by $(\nabla u - \beta)_{\textrm{sym}}^{2\times 2}$ the $(\xi,\eta)$-components of the elastic strain, and, for fixed $\zeta$, define the reduced slice energy, $E_L^{\zeta}$, by 
\be
E_L^{\zeta}(u,\beta) = \iint_{S_{\zeta}}\left|\left(\nabla u -\beta\right)_{\textrm{sym}}^{2\times2}\right|^2 + \left|\p_{\xi}\beta_{\xi\eta}\right| + \left|\p_{\eta}\beta_{\eta\xi}\right|~d\xi d\eta.
\ee
Then, clearly, $E_L(u,\beta)\geq\int_0^1 E_L^{\zeta}~d\zeta$, and, since all of our 2-d lower bounds from previous sections apply to $E_L^{\zeta}$, for a.e. $\zeta$, they also go through for the $B2$ crystal oriented as above. Meanwhile, the upper bounds for the energy can be recovered in the following way. First, simply take the same test function, $(u,\beta)$, as for the 2-d case, extend constantly in the $\zeta$ direction, and let $\beta_{\zeta\cdot}=\beta_{\cdot\zeta}=0$. This establishes the upper bounds subject to the relaxed side condition~\eqref{3dslip}. The original side condition can be recovered by introducing fine oscillations between the two possible Burgers vectors, in the active slip planes along the slip-plane normal, whereby it should be noted that the curl term picks up a factor of $\sqrt{2}$ when doing the lamination.


In summary, we have proved
\begin{theorem}
Modulo the substitution $\sigma\ra\sqrt{2}\sigma$ in the upper-bound set, the upper and lower bounds of Propositions \ref{thm4}--\ref{thm7} also hold for the energy (\ref{eq:main_energy}) with $\tau=0$, subject to (\ref{bc3}), as applied to a $B2$ crystal occupying the domain $\Omega_L=(0,1)\times(0,L)\times(0,1)$ with slip-plane normals oriented as above, and such that the cross-hardening condition (\ref{3dslip}) is satisfied. \label{3d_thm}
\end{theorem}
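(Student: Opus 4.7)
The plan is to exploit the product structure of the 3-d problem in the $\zeta$-direction: the lower bound will be obtained by a Fubini-type slicing argument that reduces each slice to the 2-d estimates of Propositions \ref{thm4}--\ref{thm7}, while the upper bound will be obtained by extending the 2-d test functions constantly in $\zeta$ and then correcting the resulting plastic distortion by a fine lamination to promote the relaxed side condition (\ref{3dslip}) to the original (\ref{eq:side_cond_unrelaxed}).

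For the lower bound, I would first observe that (\ref{3dslip}) forces the diagonal entries $\beta_{ii}$ to vanish and, on almost every slice $S_\zeta$ of constant $\zeta$, forces the in-plane $(\xi,\eta)$-block of $\beta$ to satisfy the 2-d single-slip condition (\ref{varside}). Next, the quantities $|\partial_\xi \beta_{\xi\eta}|$ and $|\partial_\eta \beta_{\eta\xi}|$ appearing in the slice curl (\ref{curl_def}) are exactly the $\zeta$-components of the row-$\xi$ and row-$\eta$ curls of $\beta$ (the would-be $\partial_\eta \beta_{\xi\xi}$ and $\partial_\xi \beta_{\eta\eta}$ terms vanish), hence are dominated by $|\curl \beta|$. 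A similar trivial domination holds for the elastic integrand. Fubini then yields $E_L(u,\beta) \geq \int_0^1 E_L^\zeta(u,\beta)\, d\zeta$, and since the 3-d boundary condition (\ref{bc3}) imposes the 2-d condition \textbf{BC2} on a.e. slice, Propositions \ref{thm4}--\ref{thm7} applied slicewise and integrated in $\zeta$ yield the claimed 3-d lower bounds (which involve $\sigma$ rather than $\sqrt{2}\sigma$, as advertised).

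For the upper bound, I would extend each 2-d minimiser from Propositions \ref{thm4}--\ref{thm7} constantly in $\zeta$, setting $u_3 = 0$ and $\beta_{\zeta\cdot} = \beta_{\cdot\zeta} = 0$. The resulting competitor satisfies (\ref{bc3}), satisfies the relaxed condition (\ref{3dslip}), and has total energy equal to its 2-d value. To upgrade to the stricter (\ref{eq:side_cond_unrelaxed}), note that the 2-d construction uses slips on planes with body-frame normals $(1,0,0)$ and $(0,1,0)$ (equivalently, lab-frame normals $(1,\pm 1,0)/\sqrt{2}$) whose Burgers vectors lie entirely in the $(\xi,\eta)$-plane, whereas the B2 crystal requires the Burgers vectors on these planes to carry a $\pm 1$ component in the $\zeta$-direction. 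The fix is a fine periodic lamination along the slip-plane normal, alternating between the two allowed Burgers vectors (which differ only in the sign of their $\zeta$-component); the $\zeta$-components cancel in average to reproduce the in-plane $\beta$, preserving elastic energy and boundary conditions in the limit, but the norm of each allowed Burgers vector is $\sqrt{2}$ times that of the relaxed one, so the total curl measure is multiplied by $\sqrt{2}$.

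The only genuinely delicate step in the plan is this final lamination: one must verify that, as the lamination period vanishes, the extra elastic energy generated at the layer interfaces tends to zero, and that the row-wise curl measure converges in total variation to exactly $\sqrt{2}$ times the relaxed measure, with no cancellations between adjacent layers. Both points are handled by explicit periodic constructions along slip-plane-normal hyperplanes, analogous to those of Conti and Ortiz~\cite{conti_05}; the key input is that the lamination interfaces are transverse to the slip-plane normal, so the curl contribution from each interface sums coherently to the correct total mass, and the scalar amplitude of the alternating Burgers vectors can be varied smoothly enough along the slip-plane direction to match the underlying relaxed $\beta$ at vanishing elastic cost.
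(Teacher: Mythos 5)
Your proposal is correct and follows essentially the same route as the paper: the lower bound via the Fubini slicing $E_L(u,\beta)\geq\int_0^1 E_L^{\zeta}\,d\zeta$ with the slice-wise single-slip condition inherited from (\ref{3dslip}), and the upper bound by constant extension in $\zeta$ followed by a fine lamination between the two admissible Burgers vectors, which accounts for the factor $\sqrt{2}$ in the curl term. Your closing remarks on controlling the interface contributions of the lamination are a slightly more careful elaboration of what the paper states only in passing.
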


\begin{remark}
For the case of a non-zero, constant strain hardening, i.e. $\tau>0$, the energy scaling is as follows. For $L\ge 2$, the minimal energy is now at most $\tau \gamma$, which is easy to see by inspecting the 2-d test function used previously. For $L<1$, we still have the same quadratic scaling of the energy with respect to (large) $\gamma$ as before, since the upper-bound test function of Figure \ref{fig15} gives a strain-hardening energy of order $\tau\gamma$. In the regime $1\le L <2$, the linear scaling of the energy with respect to $\gamma$ is preserved: the proof for the lower bound goes through unchanged, while inspection of the test function for the upper bound yields an additional contribution to the energy of order $\tau \gamma$. Furthermore, for a physically realistic model, we would expect $\tau$ to be small compared to both the elastic modulus (implicitly equal to unity here) and $\sigma$. This is due to the fact that the $L^1$-penalty on $\beta$ only accounts for self-hardening, i.e. the hardening 
which occurs if the specimen is deformed purely in single slip---all other hardening is taken care of by the single-slip side condition and the surface energy.
\end{remark}

\section{The scalar model} \label{sec:scalar}

In this section, we take a brief (and salutary) look at an ostensibly simplified scalar version of the model (\ref{E_L})-(\ref{curl_def}) which is analogous to that treated in section 4 of \cite{conti_05}. Rather than imposing a soft Dirichlet condition on the whole of $\p\overline{\Omega}_L$ as in \cite{conti_05}, however, we continue to use a hard Dirichlet condition to model a fixed horizontal shear imposed by clamping both ends of the single-crystal specimen.

As before, the rectangular specimen occupies the domain $\overline{\Omega}_L=(0,1)\times(0,L)\subset\mathbb{R}^2$, but this time we focus attention on just the $x_1$-component of the deformation field, denoted by $u\colon\overline{\Omega}_L\ra\mathbb{R}$, and the plastic strain, $\beta\colon\overline{\Omega}_L\ra\mathbb{R}^2$, which are to be obtained by minimising the functional

\be
E^s_L(u,\beta)=\int_{\overline{\Omega}_L}|\nabla u-\beta|^2 dx + \int_{\overline{\Omega}_L}|\nabla\times\beta|,\label{Es}
\ee
subject to the side condition $\beta_1 = \pm \beta_2$, in the sense of distributions in $\overline{\Omega}_L$, and where the second term in (\ref{Es}) is the measure-theoretic quantity defined in \cite{conti_05}. This quantity can be obtained from (\ref{curl_def}) by making the substitutions $\beta_{\xi\eta}\ra\beta_{\eta}$ and $\beta_{\eta\xi}\ra\beta_{\xi}$ in the usual rotated $(\xi,\eta)$ co-ordinates. In these co-ordinates, the side condition on $\beta$ takes the form: $\beta_{\xi}=0$ or $\beta_{\eta}=0$ for a.e. $(\xi,\eta)\in\overline{\Omega}_L$.

We prescribe free boundary conditions on the two `vertical' sides of $\overline{\Omega}_L$, and a fixed average shear, $\gamma$, in the $x_1$ direction on the remaining two sides. In other words, $\langle n,\nabla u-\beta\rangle=0$ on $\p\overline{\Omega}_L\cap\{x_1=0, 1\}$, where $n$ is the unit normal to $\p\overline{\Omega}_L$, $u = 0$ on $\p\overline{\Omega}_L \cap \{x_2 = 0\}$ and $u = \gamma $ on $\p\overline{\Omega}_L \cap \{x_2 = L\}$.

Now we write $J^s_L= \inf E^s_L(\cdot,\cdot)$, and ask how $J^s_L$ varies with the $L$. It turns out that analysing this question is no easier than in the full vector-valued case, and that, in fact, there is a qualitative difference between the results for the scalar model and those for the vector-valued model subject to either \textbf{BC1} or \textbf{BC2}.

First of all we note that $J^s_L$ is left-continuous and monotonically decreasing, by the same arguments used in Propositions \ref{monot} and \ref{left-cont}. Then the next proposition, combined with the foregoing remark, implies that $J^s_L=0$ for $L\geq\frac{1}{2}$ and that there is no jump in the energy at $L=\frac{1}{2}$: the difference between this result and those for \textbf{BC1} and \textbf{BC2} is explained by the fact that there is no constraint on the vertical deformation in the scalar model, which is rather unphysical. Whether the energy is strictly positive for $L<\frac{1}{2}$ remains an open question.

\begin{proposition}
Subject to $u=0$ at $x_2=0$ and $u=\gamma$ at $x_2=\frac{1}{2}$, we have $J^s_{\frac{1}{2}}=0$.
\label{thm8} 
\end{proposition}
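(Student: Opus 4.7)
The plan is to construct an explicit minimizing sequence $(u, \beta)$ for the scalar functional $E^s_{1/2}$ in rotated coordinates $\xi = (x_2 - x_1)/\sqrt{2}$, $\eta = (x_1 + x_2)/\sqrt{2}$ aligned with the slip directions, using the \emph{additively separable} ansatz $u^\beta(\xi, \eta) = G_{\alpha, \epsilon}(\xi) + F_{\alpha, \epsilon}(\eta)$ and taking $\beta := \nabla u^\beta$ (so that $\beta_\xi = G'(\xi)$ and $\beta_\eta = F'(\eta)$), with $v := u - u^\beta$ absorbing the boundary residual. The elastic term becomes $\int |\nabla v|^2$, and because each component of $\beta$ depends on only one of the rotated coordinates, the distributional derivatives $\partial_\xi \beta_\eta$ and $\partial_\eta \beta_\xi$ vanish identically, so the entire $\int|\nabla \times \beta|$ term is automatically zero regardless of the precise shape of $G$ and $F$.

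The first real step is the geometric observation specific to $L = 1/2$: the open quadrant $\{\xi > 0\} \cap \{\eta > 1/\sqrt{2}\}$ does not meet $\overline{\Omega}_{1/2}$, because these conditions read $x_2 > x_1$ and $x_1 + x_2 > 1$, and together with $x_2 \le 1/2$ they force $x_2 > 1/2$, a contradiction. This lets me take $G_{\alpha, \epsilon}$ to be a Lemma-\ref{lemma1}-type one-sided sigmoid supported in $[0, \epsilon]$ (growing like $\gamma(\xi/\epsilon)^\alpha$ and saturating at $\gamma$ for $\xi \ge \epsilon$), and similarly $F_{\alpha,\epsilon}$ supported in $[1/\sqrt{2}, 1/\sqrt{2} + \epsilon]$. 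The supports of $G'$ and $F'$ then lie in $\{\xi \in (0, \epsilon)\}$ and $\{\eta \in (1/\sqrt{2}, 1/\sqrt{2} + \epsilon)\}$, whose intersection misses $\overline{\Omega}_{1/2}$ by the above observation, so $G'(\xi) F'(\eta) \equiv 0$ on the domain and the single-slip side condition is satisfied (with the slip direction toggling between $\xi$ and $\eta$ across the two disjoint supports).

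Next I would check the boundary traces of $u^\beta$. On the bottom side $x_2 = 0$ one has $\xi \in [-1/\sqrt{2}, 0]$ and $\eta = -\xi \in [0, 1/\sqrt{2}]$, which are entirely in the ``zero'' regimes of both $G$ and $F$, so $u^\beta \equiv 0$ on the bottom and the condition $u = 0$ is met by setting $v = 0$ there. On the top side $u^\beta$ equals $\gamma$ outside an interval of width $\sqrt{2}\epsilon$ about the midpoint $x_1 = 1/2$, where one or the other of $G$ and $F$ (but never both simultaneously) is still in its ramp; a direct computation yields the residual Dirichlet datum $v(x_1, 1/2) = \gamma\,\bar f_\alpha\bigl((x_1 - 1/2)/(\sqrt{2}\epsilon)\bigr)$ in the notation of Lemma \ref{lemma1}, extended by zero to the rest of the top. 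I would then take $v$ inside $\overline{\Omega}_{1/2}$ to be the harmonic extension of these top-and-bottom Dirichlet data with the natural Neumann condition on the two lateral sides.

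The final estimate copies the Proposition \ref{thm3} template: I would reflect $\overline{\Omega}_{1/2}$ about a lateral side and identify the opposite laterals to obtain a cylinder, solve the resulting pure Dirichlet problem there, and apply Proposition 1.7 on p.307 of \cite{taylor} to bound $\|v\|_{H^1}$ by $\|\mathrm{Tr}(v)\|_{H^{1/2}}$. By the scale invariance of the one-dimensional $H^{1/2}$ semi-norm the relevant semi-norm reduces to $\gamma\,[\bar f_\alpha]_{H^{1/2}(-1,1)}$, which tends to zero by Lemma \ref{lemma1}, while the $L^2$ part of the bump is $O(\gamma^2 \epsilon)$ and disappears in the $\epsilon \to 0^+$ limit. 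Combined with the identically vanishing curl term this gives $E^s_{1/2}(u, \beta) \to 0$ under a suitable joint limit $\alpha, \epsilon \to 0^+$, proving $J^s_{1/2} = 0$. The main obstacle, relative to the vector-valued analogues in Propositions \ref{thm3} and \ref{thm7}, is that a single scalar sigmoid in either $\xi$ or $\eta$ alone cannot reconcile the top and bottom Dirichlet values because their $\xi$-ranges overlap in the strip $[-1/(2\sqrt{2}), 0]$; the separable two-sigmoid construction circumvents this precisely by exploiting the geometry of $L = 1/2$ to split the required transition between the two slip directions without ever violating single-slip in the interior.
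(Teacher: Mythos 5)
Your construction is correct and is essentially the paper's own proof: two $\xi^\alpha$-type sigmoid profiles, one per slip direction, with disjoint supports arranged so that the geometry of $L=\tfrac12$ preserves single slip and makes the curl vanish identically, followed by the Lemma~\ref{lemma1} / harmonic-extension trace argument of Proposition~\ref{thm3}. The only (immaterial) differences are that the paper defines $u^{\beta}$ piecewise on the three quadrants about the midpoint of the top edge, letting each ramp occupy a full quadrant rather than a width-$\epsilon$ strip, so no auxiliary limit $\epsilon\to 0^+$ is needed.
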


\begin{proof}
Let $L=\frac{1}{2}$, and centre the $(\xi,\eta)$ co-ordinates at the mid-point of the top boundary, as shown in Figure \ref{fig9}.

\begin{figure}
\centering
\resizebox{3.5in}{2in}{\includegraphics{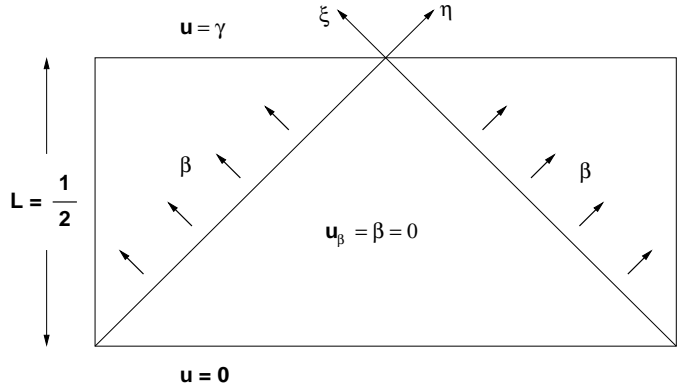}}
\caption{Test functions used in the proof of Proposition \ref{thm8}.}\label{fig9}
\end{figure}

Now, for each $\alpha\in (0,1)$, construct test functions $(u_{\beta},\beta)$ by setting $u_{\beta}=\beta=0$ on $\{\xi>0, \eta<0\}$,
\be
u_{\beta} = \gamma(2\sqrt{2}\xi)^{\alpha},\quad\beta=\nabla u_{\beta}\quad\textrm{on}\quad\{\xi>0,\eta<0\},
\ee
and
\be
u_{\beta} = \gamma(2\sqrt{2}\eta)^{\alpha},\quad\beta=\nabla u_{\beta}\quad\textrm{on}\quad\{\xi<0,\eta>0\}.
\ee

Thus, $\nabla\times\beta=0$, and, in the now familiar way, we write $v=u-u_{\beta}$, where $u$ is the unknown deformation, and we look for $v$ which minimises $\|\nabla v\|_{L^2}^2$, subject to the Dirichlet conditions
\be
v = \left\{
\begin{array}{ccl}
 0 & : & x_2=0\\
\gamma(1-(2\sqrt{2}\xi)^{\alpha}) & : & x_2=\frac{1}{2}, \eta<0\\
\gamma(1-(2\sqrt{2}\eta)^{\alpha}) & : & x_2=\frac{1}{2}, \xi<0
\end{array}
\right..
\ee

The fact that $\|\nabla v\|_{L^2}$ can be made arbitrarily small as $\alpha\ra 0$ follows by analogy with the proof of Proposition \ref{thm3}, given the shape of the Dirichlet data.
 
\end{proof}

\section{Conclusions} \label{sec:conclusions}
\begin{figure}
\centering
\resizebox{4.5in}{2.5in}{\includegraphics{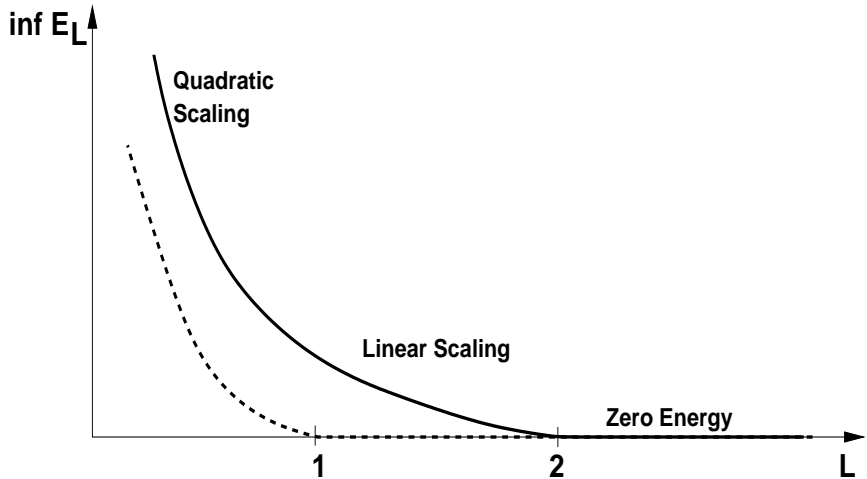}}
\caption{Schematic of the different scaling regimes for the energy~\eqref{eq:main_energy} with $\tau=0$. The solid line represents the energy when both $\sigma>0$ and the single-slip condition is enforced---in this case, the energy vanishes for $L\ge 2$, scales linearly with the (large) applied strain $\gamma$ for $1\le L < 2$ , and scales quadratically for $L<1$. The dashed line shows the energy when either $\sigma = 0$ or the single-slip side condition is dropped. In this case, the linear energy-scaling regime is no longer present.}
\label{fig_energy}
\end{figure}

We have derived energy-scaling results and explicit bounds for the elasto-plastic energy in a realistic single-crystal shear experiment. The different scaling regimes under various assumptions are summarised in Figure~\ref{fig_energy}. As discussed in the introduction, the results obtained should provide a means of discriminating between two different classes of models in crystal plasticity: those with strong cross-hardening and surface energy, and those without. It is, however, open as to whether our results carry over from the B2 case to more common crystal structures, such as face-centred-cubic or hexagonal-close-packed lattices. 

Of course, in this article, we have only considered a geometrically linear elasto-plastic energy, and while our upper bounds should still hold in the geometrically nonlinear case, using a multiplicative decomposition of the strain, obtaining the lower energy bounds is a much more challenging problem, and is the subject of ongoing work. A further objective is to carry out numerical simulations of the deformation predicted by our modelling.

Finally, the experiments themselves are also work in progress, and will be performed in collaboration with the Max Planck Institut f{\"u}r Eisenforschung in D{\"u}sseldorf---one of the main practical problems in this regard will be to impose the boundary conditions without slippage or other spurious deformation.

\section*{Acknowledgements} This work was supported by the Deutsche Forschungsgemeinschaft (DFG), within the research unit 797 `Microplast'. We would also like to thank an anonymous referee for helpful suggestions and comments.

\end{document}